\documentclass[11pt,reqno]{amsart} 

\usepackage[a4paper, layout=a4paper, marginratio={1:1, 2:3}]{geometry}
\usepackage[utf8]{inputenc}
\usepackage[T1]{fontenc}
\usepackage{lmodern}
\usepackage{amsmath,amssymb}
\usepackage{amsthm}
\usepackage{paralist}
\usepackage{float}
\usepackage{accents}
\usepackage{color}
\usepackage[authoryear]{natbib}
\usepackage{bm}
\usepackage{booktabs}
\usepackage{tabularx}
\usepackage{lscape}
\usepackage{subfig}
\usepackage{graphicx}
\bibliographystyle{apalike}

\numberwithin{equation}{section}
\usepackage{dsfont}
\usepackage[colorlinks=true,linkcolor=blue,citecolor=blue,pdfborder={0 0 0}]{hyperref}

\newcommand{\id}{\mathds{1}}

\renewcommand{\epsilon}{\varepsilon}
\newcommand{\eps}{{\varepsilon}}
\renewcommand{\phi}{\varphi}
\DeclareMathOperator{\cum}{cum} 
 
\newcommand{\ubar}[1]{\underaccent{\bar}{#1}}

\newcommand{\R}{\mathbb{R}}
\newcommand{\Z}{\mathbb{Z}}
\newcommand{\N}{\mathbb{N}}

\newcommand{\pr}{\mathbb{P}}        
\newcommand{\Prob}{\mathbb{P}}        
\newcommand{\ex}{\mathbb{E}}        

\newcommand{\Exp}{\mathbb{E}}        
\newcommand{\var}{\textnormal{Var}} 
\newcommand{\cov}{\textnormal{Cov}} 

\newcommand{\vertiii}[1]{{\left\vert\kern-0.25ex\left\vert\kern-0.25ex\left\vert #1 
    \right\vert\kern-0.25ex\right\vert\kern-0.25ex\right\vert}}

\newcommand{\Ac}{\mathcal{A}}
\newcommand{\Bc}{\mathcal{B}}

\newcommand{\Lc}{\mathcal{L}}

\newcommand{\Oc}{\mathcal{O}}

\newcommand{\Sc}{\mathcal{S}}

\newcommand{\Bb}{\mathbb{B}}

\newcommand{\rel}{\textnormal{rel}}

\newcommand{\diff}{{\,\mathrm{d}}}

\newcommand{\scs}{\scriptscriptstyle}

\newcommand{\convd}{\rightsquigarrow}              
\newcommand{\convw}{\convd}                           

\newtheorem{theorem}{Theorem}[section]
\newtheorem{proposition}[theorem]{Proposition}

\newtheorem{corollary}[theorem]{Corollary}

\theoremstyle{definition}

\newtheorem{condition}[theorem]{Condition}

\theoremstyle{remark}

\definecolor{gray}{gray}{0.6}

\parskip 3pt
\allowdisplaybreaks[4]

\begin{document}

\title[Detecting serial correlation in LSFTS]{A Portmanteau-type test for detecting serial correlation in locally stationary functional time series}

\author{Axel Bücher}
\thanks{\textit{Corresponding author:} Axel Bücher.}

\author{Holger Dette}

\author{Florian Heinrichs}

\address[Axel Bücher]{Heinrich-Heine-Universit\"at D\"usseldorf, Mathematisches Institut, Universit\"atsstr.~1, 40225 D\"usseldorf, Germany.}
\email{axel.buecher@hhu.de}
\address[Holger Dette, Florian Heinrichs]{Ruhr-Universit\"at Bochum, Fakult\"at f\"ur Mathematik, Universit\"atsstr.\ 150, 44780 Bochum, Germany.}
\email{holger.dette@rub.de, florian.heinrichs@rub.de}

\begin{abstract} 
The Portmanteau test provides the vanilla method for detecting serial correlations in classical univariate time series analysis. The method is extended to the case of observations from a locally stationary functional time series. Asymptotic critical values are obtained by a suitable block multiplier bootstrap procedure. The test is shown to asymptotically hold its level and to be consistent against general alternatives.

\medskip

\noindent \textit{Key words:} Autocovariance operator, Block multiplier bootstrap, Functional white noise, Time domain test. 
\end{abstract}

\date{\today}

\maketitle


\section{Introduction} \label{sec:intro}

Over the last decades, technological progress allowed to store more and more data. In particular, many time series are recorded with a very high frequency, as for instance intraday prices of stocks or temperature records. In the literature, data of this type is often viewed as functional observations. Due to this development, the field of functional data analysis has been very active recently (see the monographs \citealp{Bos00},  \citealp{ferrvieu2006}, \citealp{HorKok12} and \citealp{hsingeubank2015}, among others).

The statistical analysis of functional data simplifies substantially if the observations are serially uncorrelated (or even serially independent). In fact, a huge amount of methodology has been proposed solely for this scenario, whence it is important to validate or reject this assumption in applications. Moreover, in the context of (univariate) financial return data, the absence or insignificance of serial correlation is commonly interpreted as a sign for efficient market prices \citep{fama70}. Likewise, investors may be interested in knowing whether functional counterparts like cumulative intraday returns exhibit significant autocorrelation. 

If the observations are not only serially uncorrelated, but also centred and  homoscedastic, then the time series is referred to as a functional white noise. Testing for the functional white noise hypothesis has found considerable interest in the recent literature.  
For instance, inspired by classical portmanteau-type methodology  in univariate or multivariate time series analysis \citep[see][among others]{BoxPierce1970,Hosking1980,Hong1996,Pena2002}, \cite{gabrys2007} propose to apply a multivariate portmanteau test to vectors of a few principal components
from the functional time series.
\cite{horvathetal2013} and  \cite{kokoszka2017} investigate a portmanteau-type test which is  based on estimates of the norms of the autocovariance operators.
Alternatively,  tests in the frequency domain  have been proposed as well, which are based on the fact that the spectral density operator of a functional white noise time series is constant. 
\cite{zhang2016} proposes a Cramér-von Mises type test based on the functional periodogram and  
\cite{bagchi2018}  suggest a test based on an estimate of the minimum distance between  the spectral density operator and  its best approximation by a constant. 
While this approach estimates the minimal distance directly avoiding  estimation of  the spectral density operator,   \cite{characiejus2020} 
suggest a test which is based  on the distance between  the  estimated spectral density operator  and  an estimator of the operator 
calculated under the assumption  of an uncorrelated time series.

A common feature of all aforementioned references consists in the fact that the proposed methodology is only applicable under the assumption of a (second order) stationary time series.
This  paper goes a step further and investigates the problem of testing for uncorrelatedness in possibly non-stationary functional data; in particular, for certain forms of heteroscedasticity. More precisely, we propose a portmanteau type test for locally stationary functional time series, whose critical values may be obtained by a multiplier block bootstrap. As a by-product, if accompanied by a test for constancy of the variance (see, e.g., \citealp{Buecher2020}), we straightforwardly obtain a test for the null hypothesis of functional white noise as well.
Finally, we propose a generalized procedure to test for so-called \textit{relevant} serial correlations, see  Section \ref{sec:relHyp} for a rigorous definition.

 The paper is organized as follows: mathematical preliminaries, including a precise description of the hypotheses, are collected in Section~\ref{sec:math}. Suitable test statistics are introduced in Section~\ref{sec:main}, where we also prove weak convergence and validate a bootstrap approximation to obtain suitable critical values. Finite sample results are collected in Section~\ref{sec:sim}, a case study is presented in Section~\ref{sec:case} and all proofs are deferred to Section~\ref{sec:proofs}.

	\section{Mathematical Preliminaries} \label{sec:math}

Throughout this document, we deal with objects in $L^p([0,1]^d)$, for different choices of $p\geq 1$ and $d\in\N$. We denote the respective $L^p$-norms by $\|\cdot\|_{p,d}$, with the special case $\|\cdot\|_{p,1}$ abbreviated by $\|\cdot\|_p$. Further, for functions $f,g\in L^p([0,1])$, we write $(f\otimes g)(x,y)=f(x)g(y)$.

\subsection{Locally stationary time series}

For $t\in\Z$, let $X_t:[0,1]\times \Omega\to\R$ denote a  $(\Bc|_{[0,1]}  \otimes \Ac)$-measurable function with $X_t(\cdot,\omega)\in \Lc^2([0,1])$ for any $\omega \in \Omega$. We can regard $[X_t]$ as a random variable in $L^2([0,1])$ and will denote it by $X_t$ as well. The expected value of $[X_t]$ in $L^2([0,1])$ coincides with the equivalence class of $ \tau \mapsto \mu_t(\tau)=\ex[X_t(\tau)]$. Similarly, the kernel of the (auto-)covariance operator of $[X_t]$ has a representation in $\Lc^2([0,1]^2)$ with $c_{X_t}(\tau,\sigma)=\cov\big(X_t(\tau),X_t(\sigma)\big)$ and $c_{X_t,X_{t+h}}(\tau,\sigma)=\cov\big(X_t(\tau),X_{t+h}(\sigma)\big)$. We refer to Section 2.1 in \cite{Buecher2020} for technical details.

The sequence $(X_t)_{t\in\Z}$ is called a \textit{functional time series} in $L^2([0,1])$. The sequence is called \textit{stationary} if, for all $q\in\N$ and $h,t_1,\dots,t_q\in\Z$,
$$ (X_{t_1+h},\dots,X_{t_q+h})\overset{d}{=} (X_{t_1},\dots,X_{t_q}) $$
in $L^2([0,1])^q$.
For the definition of a locally stationary functional time series we use a concept introduced by \cite{Vogt2012} and \cite{vanDelft2018} \citep[see also][] {VanChaDet17,delftdette2018, Buecher2020}. To be precise we call a sequence of functional time series $(X_{t,T})_{t\in\Z}$ indexed by $T\in\N$ a \textit{locally stationary functional time series of order $\rho>0$} if there exists, for any $u\in[0,1]$, a stationary functional time series $(X_t^{\scs (u)})_{t\in\Z}$ in $L^2 ([0,1])$ and an array of real-valued random variables $\{P_{t,T}^{(u)}:t=1,\dots,T,T\in\N\}$ with $\ex|P_{t,T}^{\scs (u)}|^\rho<\infty$ uniformly in $t\in\{1,\dots,T\}, T\in\N$ and $u\in[0,1]$, such that
$$ \|X_{t,T}-X_t^{(u)}\|_2 = \bigg\{ \int_{0}^1 \{X_{t,T}(\tau) - X_{t}^{(u)}(\tau)\}^2 \diff \tau\bigg\}^{1/2}
 \leq
 \bigg(\bigg|\frac{t}{T}-u\bigg|+\frac{1}{T}\bigg)P_{t,T}^{(u)}, $$
for any $t\in\{1,\dots,T\}, T\in\N$ and $u\in[0,1]$. Note that in the case  $\rho\ge 1$ the approximating family $\{ (X_t^{\scs (u)})_{t\in\Z} : u\in[0,1]\}$ is $L^2$-Lipschitz continuous in the sense that 
\begin{equation}\label{eq:lipschitz}
\ex \|X_t^{(u)}-X_t^{(v)}\|_2\leq C|u-v|,
\end{equation}
for some constant $C>0$, by local stationarity of $X_{t,T}$. In the following discussion we assume  that 
 $X_{t,T}$ (and hence $X_{t}^{\scs (u)}$)  is centred, i.\,e. $ \mu_{t,T}= \mathbb{E}[X_{t,T}] =0$ for all $t \in \{1,\ldots , T\} $.

\subsection{Serial correlation in locally stationary time series}

In classical (functional) time series analysis, a time series is called uncorrelated if its autocovariances are zero for any lag $h>0$. In the locally stationary setup, a slightly more subtle version suggests itself: 
we call a centred locally stationary functional time series of order $\rho\ge 2$  with approximating family of square-integrable stationary time series $\{ (X_t^{\scs (u)})_{t\in\Z} : u\in[0,1]\}$ (i.\,e., $\Exp[\| X_0^{\scs(u)}\|_2^2]<\infty$ for all $u$)  serially uncorrelated
 if the hypothesis
 \begin{align} \label{hd1}
 \bar H_0 :=H_0^{(1)} \cap H_0^{(2)} \cap \dots
 \end{align}
holds, where the  individual hypothesis $H_0^{\scs (h)}$ at lag $h \in \N$ is defined by  
\begin{align} \label{eq:h01}
H_0^{(h)}:~ \|\cov(X_{0}^{(u)},X_{h}^{(u)})\|_{2,2}
=  0 \hspace{.6cm}\text{for all}~u\in[0,1].
\end{align}
If, additionally, $u \mapsto \var(X_0^{\scs(u)})$ is constant, then the locally stationary time series will be called functional white noise.
 As in Remark 1 in \cite{Buecher2020}, it may be shown that these definitions are independent of the choice of the approximating family. 

Throughout this paper, we will develop suitable tests for certain hypotheses related  to $\bar H_0$ and  $H_0^{\scs (h)}$ in \eqref{hd1} and \eqref{eq:h01}, respectively.
Following the main principle of classical portmanteau-type tests for detecting serial correlations, we start by fixing a maximum lag $H \in \N$ and to test the hypotheses
\begin{align}\label{eq:h02}
\bar H_0^{(H)}: \|\cov(X_{0}^{(u)},X_{h}^{(u)})\|_{2,2} = 0 \hspace{0.6cm} \text{for all}~h\in\{1,\dots,H\}~\text{and}~u\in[0,1] .
\end{align}
Note that  $\bar H_0= \bigcap_{H\in\N} \bar H_0^{\scs (H)}$.

\subsection{Regularity conditions on the observation scheme}
In order to obtain meaningful asymptotic results,  the following regularity conditions  will be imposed.

\begin{condition}[Assumptions on the observations] \label{cond:fts}~
	\begin{enumerate}
		\renewcommand{\theenumi}{(A1)}
		\renewcommand{\labelenumi}{\theenumi}
		\item\label{cond:ls} \textbf{Local Stationarity.} The observations $X_{1,T}, \dots X_{T,T}$ are an excerpt from a centered  locally stationary functional time series $\{(X_{t,T})_{t\in\Z}:T\in\N\}$ of order $\rho=4$ in $L^2([0,1],\R)$, with approximating family of stationary time series $\{ (X_t^{\scs (u)})_{t\in\Z}: u\in[0,1]\}$.
		\renewcommand{\theenumi}{(A2)}
		\renewcommand{\labelenumi}{\theenumi}
		\item\label{cond:mom}  \textbf{Moment Condition.}  
		For any $k\in\N$, there exists a constant $C_k<\infty$ such that $\ex \|X_{t,T}\|_2^k\leq C_k$ and $\ex\|X_0^{\scs (u)}\|_2^k\leq C_k$ uniformly in $t\in\Z,T\in\N$ and $u\in[0,1]$.
		\renewcommand{\theenumi}{(A3)}
		\renewcommand{\labelenumi}{\theenumi}
		
		\item\label{cond:cum}  \textbf{Cumulant Condition.} For any $j\in\N$ there is a constant $D_j <\infty$ such that
		\begin{equation*} 
		\sum_{t_1,\dots ,t_{j-1}=-\infty}^{\infty} \big\| \cum(X_{t_1,T},\dots ,X_{t_j,T})\big\|_{2,j} \leq D_j<\infty, 
		\end{equation*}
		for any $t_j\in\Z$ (for $j=1$ the condition is to be interpreted as $\|\ex X_{t_1,T}\|_2\leq D_1$ for all $t_1 \in \Z$). Further, for $k\in\{2,3,4\}$, there exist functions $\eta_k:\Z^{k-1}\to\R$ satisfying
		\[
		\sum_{t_1,\dots,t_{k-1}=-\infty}^{\infty} (1+|t_1|+\dots+|t_{k-1}|)\eta_k(t_1,\dots,t_{k-1})  < \infty
		\]
		such that, for any $T\in\N, 1 \le t_1 , \dots , t_k \le T, v, u_1, \dots , u_k \in[0,1],h_1,h_2\in\Z$, $Z_{t,T}^{\scs (u)}\in\{X_{\scs t, T},X_{t}^{\scs (u)}\}$,  and any $Y_{t,h,T}(\tau_1,\tau_2)\in\{ X_{t,T}(\tau_1), X_{t,T}(\tau_1)X_{t+h,T}(\tau_2) \}$, we have
		\begin{enumerate}
			\item[(i)] {\small$\|\cum(X_{t_1,T}-X_{t_1}^{(t_1/T)},Z_{t_2,T}^{(u_2)},\cdots,Z_{t_k,T}^{(u_k)})\|_{2,k} \leq \frac{1}{T} \eta_k(t_2-t_1,\dots,t_k-t_1)$},
			\item[(ii)] {\small$\|\cum(X_{t_1}^{(u_1)}-X_{t_1}^{(v)},Z_{t_2,T}^{(u_2)},\cdots,Z_{t_k,T}^{(u_k)})\|_{2,k} \leq |u_1-v| \eta_k(t_2-t_1,\dots,t_k-t_1)$}, 
			\item[(iii)] {\small$\|\cum(X_{t_1,T},\dots,X_{t_k,T})\|_{2,k} \leq \eta_k(t_2-t_1,\cdots,t_k-t_1)$}, 	
			\item[(iv)] {\small$\int_{[0,1]^{2}} |\cum\big(Y_{t_1,h_1,T}(\tau),Y_{t_2,h_2,T}(\tau) \big)|\diff\tau
				\leq \eta_2(t_2-t_1)$.}
		\end{enumerate}	
	\end{enumerate}
\end{condition}

Assumption \ref{cond:ls} restricts the non-stationary behaviour of the observations to smooth changes, while the moment condition ensures existence of the cumulants. The cumulant condition originates from classical multivariate time series analysis \citep[see, e.\,g., ][]{Brillinger1965}. Similar assumptions were made by \cite{LeeSubbaRao2016} and \cite{AueVan2020} in the context of non-stationary  functional data. 
Lemma 2 in \cite{Buecher2020} shows that \ref{cond:cum} follows from \ref{cond:ls}, \ref{cond:mom} and an additional moment condition, provided that a certain strong mixing condition is met.

	\section{Testing for serial correlation in locally stationary functional data} \label{sec:main}

\subsection{A test statistic for detecting serial correlation} \label{sec:clasHyp}

In this section, we propose a test statistic for detecting deviations from hypothesis \eqref{eq:h02} and 
prove  a corresponding weak convergence result.
 A bootstrap device for deriving suitable critical values will be discussed in the subsequent Section~\ref{sec:bootstrap}.

The test statistic is based on the following observation:  as $X_t^{\scs (u)}$  is centred  we may rewrite  (observing \eqref{eq:lipschitz}) 
hypotheses \eqref{eq:h01}  and \eqref{eq:h02} as 
\[
H_0^{(h)}: \|M_h\|_{2,3}=0 \quad \text{ and } \quad  \bar H_0^{(H)} : \max_{h=1}^{H}\|M_h\|_{2,3}=0,
\]
where
$$  M_h(u,\tau_1,\tau_2)=\int_0^u \ex[X_0^{(w)}(\tau_1)X_h^{(w)}(\tau_2)]\diff w.$$
An empirical version of $M_h$, based on the observations $X_{1,T}, \dots, X_{T,T}$, is provided by the statistic
\[
\hat{M}_{h,T}(u,\tau_1,\tau_2)=\frac{1}{T}\sum_{t=1}^{\lfloor uT\rfloor \wedge (T-h)}X_{t,T}(\tau_1)X_{t+h,T}(\tau_2).
\]
The next theorem implies consistency of the empirical versions, which suggests to reject the null hypotheses in  \eqref{eq:h01}  and \eqref{eq:h02}  
for large values of 
the statistics
\[
\Sc_{h,T} = \sqrt T \| \hat M_{h,T}\|_{2,3}  \quad \text{ and } \quad 
\bar \Sc_{H,T} = \sqrt T \max_{h=1}^H \| \hat M_{h,T}\|_{2,3},
\]
respectively.

\begin{theorem}\label{ConvM}
	Under Condition \ref{cond:fts}, we have, for any $h\in\N$ as $T \to \infty$
	$$
{1 \over \sqrt T} \Sc_{h,T}	  \to  \| M_h\|_{2,3} 
$$
in probability. Moreover,  for any $H\in\N$, $ h \in \{ 1, \ldots , H\}$ as $T \to \infty$

\[
\Sc_{h,T}\convw  
\begin{cases} 
\| \tilde B_h\|_{2,3} & \text{ under } H_0^{(h)}, \\
+\infty & \text{ else}, \end{cases}
\quad \text{ and } \quad 
\bar \Sc_{H,T}\convw  
\begin{cases} \max\limits_{h=1}^H \| \tilde B_h\|_{2,3} & \text{ under } \bar H_0^{(H)}, \\
+\infty & \text{ else}, \end{cases} 
\]
where $\tilde{B}=(\tilde B_1,\dots,\tilde B_H)$ denotes a centred Gaussian variable in $L^2([0,1]^3)^H$, whose covariance operator  $C_\Bb:L^2([0,1]^3)^H \to L^2([0,1]^3)^H$ is defined  by
\begin{equation}\label{eq:covOperator}
C_{\Bb} 
\left(\begin{array}{c}
f_1 \\ \vdots \\  f_{H}
\end{array} \right)
\left(\begin{array}{c}
(u_1, \tau_{11}, \tau_{12})  \\ \vdots \\  (u_H, \tau_{H1}, \tau_{H2})
\end{array} \right)
=
\left(\begin{array}{c}
\sum_{h=1}^H \langle r_{1,h} ((u_1,\tau_{11}, \tau_{12}) , \cdot), f_h \rangle  \\
\vdots \\  
\sum_{h=1}^H \langle r_{H,h} ((u_H,\tau_{H1}, \tau_{H2}) , \cdot), f_h \rangle 
\end{array} \right).
\end{equation}
Here, the kernel function $r_{h,h'}$is given by
\begin{align} \nonumber 
r_{h,h'}((u,\tau_1, \tau_2), (v,\phi_1, \phi_2)) 
&= 
\cov\big(\tilde {B}_h(u,\tau_1,\tau_2),\tilde {B}_{h'}(v,\phi_1,\phi_2)\big) \\
&=
\sum_{k=-\infty}^{\infty}\int_{0}^{u\wedge v}c_{k}(w) \diff w,
\label{hd0}
\end{align}
with
\begin{align*}
c_{k}(w)
&=
c_{k}(w, h, h',\tau_1, \tau_2, \varphi_1, \varphi_2)
=
\cov\big(X_0^{(w)}(\tau_1)X_h^{(w)}(\tau_2),X_k^{(w)}(\phi_1)X_{k+h'}^{(w)}(\phi_2)\big),
\end{align*}
for any $1\leq h,h'\leq H$. In particular, the infinite sum in \eqref{hd0}  converges.
\end{theorem}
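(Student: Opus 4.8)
The plan is to establish, in order: (i) an $L^2$ law of large numbers showing $\hat M_{h,T} \to M_h$ in probability in $L^2([0,1]^3)$, which together with the continuity of $\|\cdot\|_{2,3}$ gives the first assertion by the continuous mapping theorem; (ii) a functional central limit theorem for the vector $\sqrt T (\hat M_{h,T} - \ex[\hat M_{h,T}])_{h=1}^H$ in $L^2([0,1]^3)^H$, identifying the limit $\tilde B$ and its covariance operator; (iii) a bias computation showing $\sqrt T (\ex[\hat M_{h,T}] - M_h)$ vanishes under $H_0^{(h)}$ and, more generally, controlling the deterministic centering so that $\sqrt T \|\hat M_{h,T}\|_{2,3} \to +\infty$ whenever $\|M_h\|_{2,3} > 0$; (iv) a final application of the continuous mapping theorem to the maps $f \mapsto \|f\|_{2,3}$ and $(f_1,\dots,f_H) \mapsto \max_h \|f_h\|_{2,3}$.

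For step (i), I would write $\hat M_{h,T}(u,\cdot) = \frac1T \sum_{t=1}^{\lfloor uT\rfloor \wedge (T-h)} X_{t,T} \otimes X_{t+h,T}$ and split $X_{t,T} \otimes X_{t+h,T} = X_t^{(t/T)} \otimes X_{t+h}^{(t/T)} + (\text{approximation error})$. The approximation error is controlled in $L^2$-mean by local stationarity (Condition \ref{cond:ls}) together with the moment condition \ref{cond:mom} and Cauchy–Schwarz, contributing $O(1/T)$ per summand. For the leading term, a Riemann-sum argument combined with the $L^2$-Lipschitz property \eqref{eq:lipschitz} and the cumulant bound \ref{cond:cum}(iii)–(iv) (which controls $\var$ of the sum via summable second-order cumulants of the products) yields $\ex\|\hat M_{h,T} - M_h\|_{2,3}^2 \to 0$; note $\ex[X_t^{(t/T)} \otimes X_{t+h}^{(t/T)}]$ depends on $t$ only through $t/T$, so its average converges to the integral $M_h$.

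Step (ii) is the main obstacle. I would proceed via the Cramér–Wold-type device for Hilbert-space-valued processes: establish (a) convergence of finite-dimensional distributions of $u \mapsto \sqrt T(\hat M_{h,T}(u,\cdot) - \ex[\cdot])$, tested against a countable dense family of elements of $L^2([0,1]^3)$, and (b) tightness in $L^2([0,1]^3)^H$ (equivalently, in the space of $L^2([0,1]^2)^H$-valued cadlag functions on $[0,1]$, with a moment/increment bound). For (a), after pairing with a test function $f$, one reduces to a CLT for a triangular array of the form $\frac{1}{\sqrt T}\sum_t \langle X_{t,T}\otimes X_{t+h,T}, f\rangle$ (minus its mean); the cumulant condition \ref{cond:cum} — in particular the fast decay of joint cumulants of the products $X_{t,T}(\tau_1)X_{t+h,T}(\tau_2)$, which follows by multilinearity of cumulants from bounds on cumulants of the $X_{t,T}$'s — forces all cumulants of order $\ge 3$ of the normalized sum to vanish asymptotically, giving asymptotic normality (this is the classical Brillinger approach). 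The limiting variance computation produces exactly the kernel $r_{h,h'}$ in \eqref{hd0}: replacing $X_{t,T}$ by the stationary approximation $X_t^{(t/T)}$ and localizing the summation over $t$ in the neighborhood $w \approx t/T$ turns $\cov$ of the two product sums into $\int_0^{u\wedge v} \sum_k c_k(w)\,dw$, and absolute convergence of $\sum_k c_k(w)$ (uniformly in $w$) is again a consequence of \ref{cond:cum}(iv) applied to $Y_{t,h,T}(\tau_1,\tau_2) = X_{t,T}(\tau_1)X_{t+h,T}(\tau_2)$. For tightness (b), a bound of the form $\ex\|\sqrt T(\hat M_{h,T}(u) - \hat M_{h,T}(v))\|_{2,2}^2 \lesssim |u-v| + 1/T$ on the $L^2([0,1]^2)$-valued increments — derived from the summable second-order cumulant bound \ref{cond:cum}(iv) — suffices, since the finitely many coordinates $h=1,\dots,H$ are handled simultaneously.

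For step (iii), $\ex[\hat M_{h,T}(u,\cdot)] = \frac1T\sum_{t=1}^{\lfloor uT\rfloor\wedge(T-h)} c_{X_{t,T},X_{t+h,T}}$; using local stationarity to replace this by $\frac1T\sum_t \cov(X_0^{(t/T)},X_h^{(t/T)})$ up to an $O(1/T)$ error (here \ref{cond:cum}(i)–(ii) give the approximation of the cross-covariance kernel in $L^2([0,1]^2)$), and then comparing the Riemann sum to the integral using \eqref{eq:lipschitz}, shows $\sqrt T(\ex[\hat M_{h,T}] - M_h) = O(1/\sqrt T)$, which is asymptotically negligible. Under $H_0^{(h)}$ we have $M_h \equiv 0$, so $\Sc_{h,T} = \sqrt T\|\hat M_{h,T}\|_{2,3}$ converges weakly to $\|\tilde B_h\|_{2,3}$ by Slutsky and the continuous mapping theorem; if instead $\|M_h\|_{2,3}>0$, then $\hat M_{h,T}\to M_h$ in probability (step (i)) forces $\|\hat M_{h,T}\|_{2,3}$ to be bounded away from $0$ with probability tending to one, whence $\Sc_{h,T}\to+\infty$ in probability. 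The statements for $\bar\Sc_{H,T}$ follow identically, applying the continuous map $(f_1,\dots,f_H)\mapsto\max_{h=1}^H\|f_h\|_{2,3}$ to the joint convergence from step (ii), and noting that $\bar H_0^{(H)}$ fails exactly when $\|M_h\|_{2,3}>0$ for some $h\le H$.
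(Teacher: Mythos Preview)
Your proposal is correct and follows the same architecture as the paper's proof: establish the joint weak convergence $\sqrt{T}(\hat M_{h,T}-M_h)_{h=1}^H \convw \tilde B$ in $L^2([0,1]^3)^H$, and deduce all assertions via Slutsky and the continuous mapping theorem. The only substantive difference is that the paper does not carry out your step~(ii) from scratch but simply invokes Theorem~1 of \cite{Buecher2020} for the FCLT of the centered process $\sqrt{T}(\hat M_{h,T}-\ex\hat M_{h,T})_{h=1}^H$; the bias computation (your step~(iii)) is then done essentially as you outline, giving $\sqrt{T}\|\ex\hat M_{h,T}-M_h\|_{2,3}=O(T^{-1/2})$. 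Your Brillinger-cumulant sketch is the standard route by which that cited result would be proved, so this is not a genuinely different approach but a more self-contained version of the same one; note also that your step~(i) becomes redundant once (ii) and (iii) are in place, since the consistency $\hat M_{h,T}\to M_h$ in probability already follows from the FCLT.
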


It is worthwhile to mention that the distributions of the limiting variables in the previous theorems are not pivotal under the null hypotheses. As a consequence, critical values for respective statistical tests must be estimated, for instance by a plug-in approach or by a suitable bootstrap device. Throughout this paper, we 
propose a bootstrap approach which will be worked out in Section \ref{sec:bootstrap} below.

\subsection{Detecting relevant serial correlations} \label{sec:relHyp}
In the previous section, we considered ``classical'' hypotheses in the sense that we were testing  whether the 
 covariance operators up to lag $H$  are exactly equal to zero. 
 However, in concrete applications,  hypotheses of this type might rarely be satisfied exactly and it might rather be reasonable to reformulate the null hypothesis in the form that ``the norm of the autocovariance operator is small'', but not exactly equal to $0$.
More precisely, given thresholds  $\Delta_{h} > 0$
which may vary with the lag  $h  \in  \{1, \ldots , H\} $,  we propose to consider
the following \textit{relevant hypotheses}
\begin{align}
\nonumber
H_0^{(h,\Delta)}&: \|M_h\|_{2,3} \leq \Delta_h,\\
\bar H_0^{(H,\Delta)}&: \|M_h\|_{2,3}  \leq \Delta_h \quad \text{for all}~h\in\{1,\dots,H\},
\label{hd3}
\end{align} 
where $H\in\N$ is some fixed constant representing the maximal lag under consideration. 
The choice of the thresholds  $\Delta_h$ depends on the specific application and has to be discussed with the practitioner in concrete applications. Although this
may be a daunting task, we strongly argue that one should carefully think about it as the classical implicit choice of $\Delta_h=0$ typically corresponds to an unrealistic null hypothesis in many applications.

Consistency of $\hat M_{h,T}$ for $M_h$ suggests to reject the above hypotheses for large values of $\hat M_{h,T}$. We propose to consider the ``normalized''
test statistics 
\begin{align*}
\mathcal S_{h,\Delta_h,T}
&= \sqrt{T}(\|\hat{M}_{h,T}\|_{2,3}-\Delta_h)\|\hat{M}_{h,T}\|_{2,3} ~, \\
\bar{\mathcal S}_{H, \Delta, T} 
&= \max_{h=1}^H  \sqrt{T}(\|\hat{M}_{h,T}\|_{2,3}-\Delta_h)\|\hat{M}_{h,T}\|_{2,3}   ~, 
\end{align*}
whose asymptotic properties are described in the following result. It is worthwhile to mention that related test statistics like $\sqrt T(\| \hat M_{h,T}\|_{2,3}^2-\Delta_h^2)$ or $\sqrt{T}(\|\hat{M}_{h,T}\|_{2,3}-\Delta_h)\Delta_h$ may be treated similarly, but that the respective tests exhibited  a worse finite-sample performance in an unreported Monte-Carlo simulation study.

\begin{corollary}\label{ConvM2}
Under Condition \ref{cond:fts}, we have, for any fixed $H\in\N$ and for $T\to\infty$,
\[ 
\sqrt{T}\big((\|\hat{M}_{h,T}\|_{2,3}-\|M_h\|_{2,3})\|\hat{M}_{h,T}\|_{2,3}\big)_{h=1,\dots,H}\convw\big(\langle M_h,\tilde{B}_h\rangle\big)_{h=1,\dots,H},
\]
where $\tilde B_1, \dots, \tilde B_H$ are defined in Theorem~\ref{ConvM} and
 $\langle f,g \rangle = \int_{[0,1]^3} f(x) g(x) \diff x$. As a consequence, 
 \[
\mathcal S_{h,\Delta_h,T}
\convw\left\{
\begin{array}{rl}
- \Delta_h \| \tilde B_h \|_{2,3} & \text{if } \|M_h\|_{2,3} = 0, \\
-\infty & \text{if } \|M_h\|_{2,3} \in (0,\Delta_h),\\
 \langle M_h, \tilde{B}_h\rangle & \text{if }\|M_h\|_{2,3}=\Delta_h,\\
+\infty & \text{if }\|M_h\|_{2,3}>\Delta_h.
\end{array}\right. 
\]
Moreover, 
\[
\bar {\mathcal S}_{H, \Delta,T} 
\convw\left\{
\begin{array}{rl}
\max\{ \max_{h\in N_H}  \langle M_h, \tilde{B}_h\rangle, \max_{h \in O_h} - \Delta_h \|\tilde B_h\|_{2,3}\} & \text{ if }\bar H_0^{(H,\Delta)} \text{ is met}, \\
+\infty & \text{ else},
\end{array}\right.
\]
where $N_H=\{h \in \{1, \dots, H\}: \|M_h\|_{2,3} = \Delta_h\}$, $O_H=\{h\in\{1, \dots, H\}: \|M_h\|_{2,3}= 0\}$ and where the maximum over the empty set is interpreted as $-\infty$. 
\end{corollary}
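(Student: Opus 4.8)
The plan is to derive the whole corollary from the functional central limit theorem that is the key intermediate step in the proof of Theorem~\ref{ConvM}, namely the joint weak convergence
\[
\Gb_T := \sqrt T\,\big(\hat M_{h,T} - M_h\big)_{h=1,\dots,H} \;\convw\; \tilde B = (\tilde B_1,\dots,\tilde B_H)
\]
in $L^2([0,1]^3)^H$. Write $\Gb_{h,T} := \sqrt T(\hat M_{h,T} - M_h)$ for the $h$-th coordinate, so $\hat M_{h,T} = M_h + T^{-1/2}\Gb_{h,T}$ and $\|\Gb_{h,T}\|_{2,3} = O_\Prob(1)$. Two consequences will be used repeatedly: $\|\hat M_{h,T}\|_{2,3} \convp \|M_h\|_{2,3}$ (the first assertion of Theorem~\ref{ConvM}), and $(\langle M_h,\Gb_{h,T}\rangle)_{h=1,\dots,H} \convw (\langle M_h,\tilde B_h\rangle)_{h=1,\dots,H}$ by applying the continuous mapping theorem to the continuous linear map $(f_1,\dots,f_H)\mapsto(\langle M_1,f_1\rangle,\dots,\langle M_H,f_H\rangle)$ on $L^2([0,1]^3)^H$.

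First I would prove the first display. Squaring yields the identity $\|\hat M_{h,T}\|_{2,3}^2 - \|M_h\|_{2,3}^2 = \tfrac{2}{\sqrt T}\langle M_h,\Gb_{h,T}\rangle + \tfrac1T\|\Gb_{h,T}\|_{2,3}^2$, hence $\sqrt T(\|\hat M_{h,T}\|_{2,3}^2 - \|M_h\|_{2,3}^2) = 2\langle M_h,\Gb_{h,T}\rangle + o_\Prob(1) \convw 2\langle M_h,\tilde B_h\rangle$, jointly in $h$. For those $h$ with $\|M_h\|_{2,3} > 0$, I would factor $\|\hat M_{h,T}\|_{2,3}^2 - \|M_h\|_{2,3}^2 = (\|\hat M_{h,T}\|_{2,3} - \|M_h\|_{2,3})(\|\hat M_{h,T}\|_{2,3} + \|M_h\|_{2,3})$ — dividing is legitimate since $\|\hat M_{h,T}\|_{2,3} + \|M_h\|_{2,3}\ge\|M_h\|_{2,3}>0$ — multiply by $\|\hat M_{h,T}\|_{2,3}$, and invoke Slutsky's lemma together with $\|\hat M_{h,T}\|_{2,3}/(\|\hat M_{h,T}\|_{2,3}+\|M_h\|_{2,3}) \convp \tfrac12$ to get $\sqrt T(\|\hat M_{h,T}\|_{2,3} - \|M_h\|_{2,3})\|\hat M_{h,T}\|_{2,3} \convw \langle M_h,\tilde B_h\rangle$. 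For those $h$ with $\|M_h\|_{2,3} = 0$ (so $M_h=0$ in $L^2([0,1]^3)$) one has $\hat M_{h,T} = T^{-1/2}\Gb_{h,T}$ and the quantity in question equals $T^{-1/2}\|\Gb_{h,T}\|_{2,3}^2 = o_\Prob(1) = \langle M_h,\tilde B_h\rangle$. Since in either case the coordinate is, up to an $o_\Prob(1)$ term, a fixed continuous transformation of the single weakly convergent vector $\Gb_T$ times weights converging in probability to constants, the joint convergence of the vector follows by Slutsky and the continuous mapping theorem.

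Given the first display, the statements on $\mathcal S_{h,\Delta_h,T}$ follow from the decomposition
\[
\mathcal S_{h,\Delta_h,T} = \sqrt T\big(\|\hat M_{h,T}\|_{2,3} - \|M_h\|_{2,3}\big)\|\hat M_{h,T}\|_{2,3} + \sqrt T\big(\|M_h\|_{2,3} - \Delta_h\big)\|\hat M_{h,T}\|_{2,3},
\]
whose first summand converges to $\langle M_h,\tilde B_h\rangle$ while the second equals a deterministic scalar times $\|\hat M_{h,T}\|_{2,3}\convp\|M_h\|_{2,3}$: if $\|M_h\|_{2,3} = \Delta_h$ the second summand vanishes identically; if $0 < \|M_h\|_{2,3} < \Delta_h$ (resp. $\|M_h\|_{2,3} > \Delta_h$) it tends to $-\infty$ (resp. $+\infty$) while the first summand is $O_\Prob(1)$; and if $\|M_h\|_{2,3} = 0$ one computes directly $\mathcal S_{h,\Delta_h,T} = T^{-1/2}\|\Gb_{h,T}\|_{2,3}^2 - \Delta_h\|\Gb_{h,T}\|_{2,3} \convw -\Delta_h\|\tilde B_h\|_{2,3}$, using $\Delta_h>0$. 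For $\bar{\mathcal S}_{H,\Delta,T} = \max_{h=1}^H\mathcal S_{h,\Delta_h,T}$ under $\bar H_0^{(H,\Delta)}$, I would split $\{1,\dots,H\}$ into $N_H$, $O_H$ and $R_H := \{h: 0<\|M_h\|_{2,3}<\Delta_h\}$; on $R_H$ each coordinate tends to $-\infty$, so if $N_H\cup O_H\neq\emptyset$ the maximum is, with probability tending to one, attained over $N_H\cup O_H$, on which $(\mathcal S_{h,\Delta_h,T})_{h\in N_H\cup O_H}$ converges jointly (again a continuous transformation of $\Gb_T$ up to $o_\Prob(1)$) to $((\langle M_h,\tilde B_h\rangle)_{h\in N_H},(-\Delta_h\|\tilde B_h\|_{2,3})_{h\in O_H})$; the continuous mapping theorem applied to the maximum then gives the claim, the maximum over $\emptyset$ being read as $-\infty$, which also covers $N_H\cup O_H=\emptyset$. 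Finally, if $\bar H_0^{(H,\Delta)}$ is violated there is $h_0$ with $\|M_{h_0}\|_{2,3}>\Delta_{h_0}$, whence $\bar{\mathcal S}_{H,\Delta,T}\ge\mathcal S_{h_0,\Delta_{h_0},T}\to+\infty$.

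There is no deep obstacle here; the main care points are that one genuinely needs the functional CLT for $\sqrt T(\hat M_{h,T}-M_h)$, and not merely the convergence of norms as stated in Theorem~\ref{ConvM}, in order to identify the linear functional $\langle M_h,\tilde B_h\rangle$ as the limit, and that one must organise the dichotomy $\|M_h\|_{2,3}=0$ versus $\|M_h\|_{2,3}>0$ consistently inside the vector-valued weak convergence and, for the maximum, handle the interaction with those coordinates diverging to $-\infty$.
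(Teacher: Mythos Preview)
Your proof is correct. The paper's proof also rests on the functional CLT \eqref{hd2} established inside the proof of Theorem~\ref{ConvM}, but packages the linearisation abstractly via the functional delta method: it records in a separate proposition (Proposition~\ref{HadamardDiff}) that $\Phi=\|\cdot\|_{2,3}$ is Hadamard-differentiable at any $M$ with $\|M\|_{2,3}>0$ with derivative $h\mapsto\langle M,h\rangle/\|M\|_{2,3}$, applies the delta method to obtain $\sqrt T(\|\hat M_{h,T}\|_{2,3}-\|M_h\|_{2,3})\convw\langle M_h,\tilde B_h\rangle/\|M_h\|_{2,3}$ jointly over $h$ with $\|M_h\|_{2,3}>0$, and then multiplies by $\|\hat M_{h,T}\|_{2,3}\convp\|M_h\|_{2,3}$ via Slutsky. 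Your direct expansion of $\|\hat M_{h,T}\|_{2,3}^2-\|M_h\|_{2,3}^2$ followed by factoring the difference of squares is the elementary, bare-hands version of exactly this derivative computation; it avoids the delta-method theorem and the auxiliary proposition at the price of a few explicit algebraic lines. Conversely, the paper is very terse on the consequences for $\mathcal S_{h,\Delta_h,T}$ and $\bar{\mathcal S}_{H,\Delta,T}$ (it writes only ``Apply Slutsky's lemma to conclude''), whereas you spell out the full case analysis --- in particular the handling of coordinates in $R_H$ diverging to $-\infty$ when taking the maximum over $N_H\cup O_H\cup R_H$ --- which is genuinely additional detail not made explicit in the paper.
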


As in Section \ref{sec:clasHyp},  the limiting distributions under the null hypotheses are not pivotal, whence a bootstrap procedure will be introduced next.

\subsection{Critical values based on bootstrap approximations}
\label{sec:bootstrap}
The limiting distributions of the  test statistics  derived in the previous sections depend in a complicated way on the higher order serial dependence of the underlying approximating family $\{ (X_t^{\scs (u)})_{t\in\Z}: u\in[0,1]\}$ and are rather difficult to estimate. 
To avoid the estimation, we propose a multiplier block bootstrap procedure.

Following  \cite{Buecher2020} the bootstrap scheme will be defined in terms of i.i.d.\ standard normally  distributed  random variables $\{R_i^{(\scs k)}\}_{i,k\in\N}$ which are independent of $\{(X_{t,T})_{t\in\Z}:T\in\N\}$. Further, let $m=m_T$ and $n=n_T$ denote two block length sequences satisfying one of the following two conditions. 

\begin{condition}\label{cond:bootstrap}~ \vspace{-.1cm}
	\begin{enumerate}
		\item[(B1)] 
		The block length $m=m_T \in\{1, \dots, T\}$ tends to infinity and satisfies $m=o(T)$ as $T\to\infty$.
		\item[(B2)] 
		The block length $n=n(T)\in\{1, \dots, T\}$ satisfies $m/n=o(1)$ and $m n^2=o(T^2)$ as $T\to\infty$.
	\end{enumerate}
\end{condition}
 
Next, let $K\in\N$ denote the number of bootstrap replications. For $k\in\{1, \dots,K\}$ and $h \in \{1, \dots, H\}$, define multiplier bootstrap approximations for 
\[
B_{h,T}(u, \tau_1, \tau_2) = \sqrt T \{ \hat {M}_{h,T}(u, \tau_1, \tau_2)- M_{h}(u, \tau_1, \tau_2)\}
\]
as
 \begin{align*}
 \hat{B}_{h,n,T}^{(k)}(u,\tau_1,\tau_2) 
 =
 \frac{1}{\sqrt{T}} \sum_{i=1}^{\lfloor uT\rfloor \wedge (T-h)}\frac{R_i^{(k)}}{\sqrt{m}} 
 \sum_{t=i}^{(i+m-1)\wedge (T-h)} \big\{ X_{t,T}(\tau_1) &X_{t+h,T}(\tau_2) \\
&  - \hat \mu_{t,h,n,T}(\tau_1,\tau_2) \big\},
 \end{align*}
 where
 \begin{align*} 
\hat \mu_{t,h,n,T}(\tau_1,\tau_2) =   \frac{1}{\tilde{n}_{t,h}}\sum_{j=\ubar{n}_t}^{\bar{n}_{t,h}}X_{t+j,T}(\tau_1)X_{t+j+h,T}(\tau_2)
 \end{align*}
 denotes the local empirical product moment of lag $h$ with
 \begin{equation*}
 \bar{n}_{t,h}=n\wedge (T-t-h), \quad \ubar{n}_t=-n\vee (1-t), \quad \tilde{n}_{t,h} = \bar{n}_{t,h}-\ubar{n}_t+1.
\end{equation*}
Note that for $n=T$ we obtain $\hat \mu_{t,h,T,T} = \hat \mu_{h,T}$ for all $t\in\{1, \dots, T\}$, where 
\[
\hat \mu_{h,T}(\tau_1,\tau_2) = \frac{1}{T-h}\sum_{t=1}^{T-h}X_{t,T}(\tau_1)X_{t+h,T}(\tau_2)
\]
denotes the global empirical product moment.
Let $\hat{\Bb}_{n,T}^{\scs (k)} = ( \hat{B}_{1,n,T}^{\scs (k)},\dots, \hat{B}_{H,n,T}^{\scs (k)})$ and $\hat{\Bb}_T = \sqrt{T}\big( B_{1,T}, \dots, B_{H,T}\big)$.
The following result shows that this multiplier bootstrap is consistent.

\begin{theorem}\label{theo:boot}
Suppose that Condition \ref{cond:fts} is met and let $\tilde{B}^{(1)},\tilde{B}^{(2)}, \dots$ denote independent copies of $\tilde{B}$. Fix $K,H\in\N$.
\begin{enumerate}
\item[(i)] If Condition~\ref{cond:bootstrap} (B1) and (B2) are met, then, as $T\to\infty$,
\[  
(\hat{\Bb}_T,\hat{\Bb}_{n,T}^{(1)},\dots,\hat{\Bb}_{n,T}^{(K)})
\convw
(\tilde{B},\tilde{B}^{(1)},\dots, \tilde{B}^{(K)}).
\]
\item[(ii)] If Condition~\ref{cond:bootstrap} (B1) is met and if $\cov(X_0^{\scs (0)}, X_h^{\scs (0)})=\cov(X_0^{\scs (w)}, X_h^{\scs (w)})$ for any $w\in[0,1]$ and $h\in\Z$,
then, as $T\to\infty$,
\[  
(\hat{\Bb}_T,\hat{\Bb}_{T,T}^{(1)},\dots,\hat{\Bb}_{T,T}^{(K)})
\convw
(\tilde{B},\tilde{B}^{(1)},\dots, \tilde{B}^{(K)}).
\]
 \end{enumerate}
 \end{theorem}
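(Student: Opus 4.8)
The plan is to treat the result in the standard two-stage fashion used for bootstrap consistency in function spaces. Write $\Xc_T$ for the $\sigma$-field generated by the observations $\{X_{t,T}:1\le t\le T\}$, and recall that $L^2([0,1]^3)^H$ is a separable Hilbert space, on which weak convergence of a sequence of random elements may be checked through (a) convergence of the coordinates along a fixed complete orthonormal system, and (b) a uniform (in the sequence index) bound on the tail of the second moments in that expansion. The first input is the unconditional weak convergence $\hat\Bb_T\convw\tilde B$, which is established in the course of proving Theorem~\ref{ConvM}. The second, and main, input is a \emph{conditional} central limit theorem stating that, conditionally on $\Xc_T$, the vector $(\hat\Bb_{n,T}^{(1)},\dots,\hat\Bb_{n,T}^{(K)})$ converges weakly, in probability, to $(\tilde B^{(1)},\dots,\tilde B^{(K)})$. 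Since the conditional limit does not depend on $\Xc_T$, combining these two inputs via the usual lemma relating conditional and joint unconditional weak convergence (as in \citealp{Buecher2020}) delivers the asserted joint convergence with all coordinates independent.

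As a preliminary reduction I would replace the data-driven centering $\hat\mu_{t,h,n,T}$ by the deterministic local product moments $\mu_{t,h,T}(\tau_1,\tau_2)=\ex[X_{t,T}(\tau_1)X_{t+h,T}(\tau_2)]$, writing $\hat B_{h,n,T}^{(k)}=\hat B_{h,n,T}^{(k),\ast}+\hat R_{h,n,T}^{(k)}$, where $\hat B_{h,n,T}^{(k),\ast}$ is the version built from $\mu_{t,h,T}$ and $\hat R_{h,n,T}^{(k)}$ collects the estimation error. Conditionally on $\Xc_T$, the remainder $\hat R_{h,n,T}^{(k)}$ is a centred Gaussian field, and a bias--variance bound for the local mean estimator, namely $\ex\|\hat\mu_{t,h,n,T}-\mu_{t,h,T}\|_{2,2}^2=O(n^2/T^2+1/n)$ --- the bias part controlled through the $L^2$-Lipschitz continuity implied by \eqref{eq:lipschitz}, the variance part through \ref{cond:cum} --- shows that $\ex[\|\hat R_{h,n,T}^{(k)}\|_{2,3}^2\mid\Xc_T]$ is of order $mn^2/T^2+m/n$, hence $o(1)$ in probability under Condition~\ref{cond:bootstrap}(B2). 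For part~(ii), where $n=T$ and $\hat\mu_{t,h,T,T}$ reduces to the global product moment $\hat\mu_{h,T}$, the assumed constancy of $w\mapsto\cov(X_0^{(w)},X_h^{(w)})$ removes the otherwise non-negligible bias of $\hat\mu_{h,T}$, leaving $\ex\|\hat\mu_{h,T}-\mu_{t,h,T}\|_{2,2}^2=O(1/T)$ and a conditional variance of order $m/T=o(1)$ under (B1).

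It then remains to prove the conditional central limit theorem for the de-biased family $(\hat\Bb_{n,T}^{(1),\ast},\dots,\hat\Bb_{n,T}^{(K),\ast})$ given $\Xc_T$. A simplification here is that, conditionally on $\Xc_T$, this family is an \emph{exact} centred Gaussian element of $L^2([0,1]^3)^{HK}$ (a finite linear combination of the Gaussian multipliers $R_i^{(k)}$ with deterministic coefficients), so no Lindeberg-type argument is needed: it suffices to show that its conditional covariance operator converges in probability to $C_\Bb$ --- block-diagonal over the $K$ replications, the off-diagonal blocks vanishing because $R^{(k)}$ and $R^{(k')}$ are independent for $k\ne k'$, which is exactly what yields the asymptotic independence of $\tilde B^{(1)},\dots,\tilde B^{(K)}$ --- and that the family is asymptotically tight, uniformly in probability, in $L^2([0,1]^3)^{HK}$. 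The relevant conditional covariances are block averages $\tfrac{1}{mT}\sum_i\id\{i\le\cdot\}\,S_{i,h,T}(\tau_1,\tau_2)\,S_{i,h',T}(\phi_1,\phi_2)$ with block sums $S_{i,h,T}=\sum_{t=i}^{(i+m-1)\wedge(T-h)}\{X_{t,T}\otimes X_{t+h,T}-\mu_{t,h,T}\}$; their convergence in probability to the kernel $r_{h,h'}$ of \eqref{hd0} is a law-of-large-numbers statement, the convergence of the mean being the familiar long-run-variance computation for block sums under local stationarity (using $m\to\infty$ and $m=o(T)$), and the vanishing of the variance following from the summable-cumulant bounds of \ref{cond:cum}, in particular \ref{cond:cum}(iv). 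Asymptotic tightness follows from the Hilbert-space criterion above --- a uniform tail bound on the conditional second moments along a complete orthonormal system --- together with a Donsker-type estimate for the partial-sum process in the $u$-argument, both resting on \ref{cond:mom} and \ref{cond:cum}.

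The main obstacle is precisely this function-space conditional central limit theorem: the in-probability convergence of the \emph{random} conditional covariance operator to $C_\Bb$, and the uniform-in-probability tightness. This mirrors the corresponding argument in \citep{Buecher2020}, but has to be carried out for the lag-$h$ product process $(X_{t,T}(\tau_1)X_{t+h,T}(\tau_2))_{t}$ rather than for $(X_{t,T})_{t}$ itself; it is exactly for this purpose that Condition~\ref{cond:fts} includes the cumulant bounds \ref{cond:cum}(iv) for the objects $Y_{t,h,T}$, which supply the uniform control needed to push those arguments through.
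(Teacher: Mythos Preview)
Your proposal is correct, but it takes a substantially more laborious route than the paper. The paper's proof of part~(i) is a single sentence: the statement is exactly the conclusion of Theorem~2 in \cite{Buecher2020}, applied with the $L^2([0,1]^2)$-valued process $Y_{t,h,T}(\tau_1,\tau_2)=X_{t,T}(\tau_1)X_{t+h,T}(\tau_2)$ in place of $X_{t,T}$; Condition~\ref{cond:fts}\ref{cond:cum}(iv) was included precisely so that the hypotheses of that theorem are met for the product process, whence nothing needs to be redone. For part~(ii) the paper again invokes an off-the-shelf result (Theorem~C.3 of the supplement to \cite{Buecher2020}) for the version $\tilde B_{T,h}^{(k)}$ with deterministic centering $\ex[X_{t,T}\otimes X_{t+h,T}]$, and then shows by a direct second-moment computation that $\hat B_{h,T,T}^{(k)}-\tilde B_{T,h}^{(k)}=O_\Prob(\sqrt{m/T})$ under the constancy assumption.

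What you outline --- replace $\hat\mu_{t,h,n,T}$ by $\mu_{t,h,T}$, control the remainder via the bias--variance bound $O(n^2/T^2+1/n)$ under~(B2) (respectively $O(1/T)$ under the constancy assumption for $n=T$), and then prove a conditional CLT for the de-biased bootstrap by exploiting its exact conditional Gaussianity and establishing convergence of the conditional covariance operator plus tightness --- is essentially a re-derivation of the content of those cited theorems in the present setting. Your decomposition and the orders you quote are right, and your observation that conditional Gaussianity obviates any Lindeberg argument is a nice simplification. The trade-off is self-containment versus economy: your approach would make the paper independent of the external reference at the cost of several pages of estimates, whereas the paper's approach reduces the whole theorem to a short corollary of results already proved in \cite{Buecher2020}.
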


It is worthwhile to mention that the assumption on $\cov(X_0^{\scs (w)}, X_h^{\scs (w)})$ in Theorem~\ref{theo:boot}(ii) is met provided that $X_{t,T}=X_t$ for some stationary time series $(X_t)_{t\in \Z}$. In such a situation (for instance to be validated by a stationarity test in practice), using the bootstrap scheme with $n=T$ over the one with $n$ satisfying Condition~\ref{cond:bootstrap} (B2) typically results in better finite sample results, see Section~\ref{sec:sim} for more details.

Subsequently, we reconsider the  problem of testing  for  serial uncorrelation of  a locally stationary time series
using  classical  and relevant hypotheses. 
For  the sake of brevity, we only treat the hypotheses  $\bar H_0^{\scs (H)}$ and $\bar H_{0}^{\scs (H,\Delta)}$, which
are defined in    \eqref{eq:h02}  and  \eqref{hd3}, respectively  and  involve
multiple lags. For this purpose we consider the   following bootstrap approximations of the respective test statistics
\begin{align*}
\bar{\mathcal {S}}_{H,n,T}^{(k)} &= \max_{h=1}^H \| \hat{\Bb}_{h,n,T}^{(k)}  \|_{2,3}
\end{align*}
for the classical hypotheses and 
\begin{align*}
\bar{\mathcal {S}}_{H,n,T, \rel}^{(k)} &= \max_{h=1}^H \langle\hat M_{h,T},  \hat{\Bb}_{h,n,T}^{(k)} \rangle
\end{align*}
for the relevant hypotheses.   Finally, we propose  to reject the classical 
hypothesis   \eqref{eq:h02} whenever 
\begin{align} \label{classtest}
\bar p_{H,n,K,T}
=
\frac{1}{K}\sum_{k=1}^{K}\id\Big(\bar{\mathcal {S}}_{H,n,T}^{(k)} \geq  \bar{\mathcal S}_{H,T} \Big)
< \alpha ~.
\end{align}
Similarly, the relevant hypothesis  \eqref{hd3} is rejected whenever
\begin{align} \label{reltest}
\bar p_{H,n,K,T, \rel}
=
\frac{1}{K}\sum_{k=1}^{K}\id\Big(\bar{\mathcal {S}}_{H,n,T, \rel}^{(k)} \geq  \bar{\mathcal S}_{H,\Delta, T} \Big) < \alpha.
\end{align}

 \begin{corollary} \label{cor:test}
Fix $\alpha\in(0,1)$, suppose that Condition \ref{cond:fts} is met and let $K=K_T\to\infty$.  
\begin{enumerate}
\item[(i)] If Condition~\ref{cond:bootstrap} (B1) and (B2) hold, then the decision rule \eqref{classtest} 
defines  a consistent asymptotic level $\alpha$ test for  the classical hypotheses   \eqref{eq:h02}, that is
\[
\lim_{T \to \infty} \Prob(\bar p_{H,n,K,T} < \alpha) = \begin{cases} \alpha & \text{ under } \bar H_0^{(H),} \\
1 & \text{ else.}
\end{cases}
\]
Similarly, {for $\alpha<1/2$},  the decision rule  \eqref{reltest}  for  the relevant hypotheses    \eqref{hd3} satisfies
\begin{align} 
 \nonumber 
\hspace{2cm} \lim_{T\to\infty} \Prob(\bar p_{H,n,K,T, \rel} < \alpha) = 
0 & \quad \text{ if } \|M_h\|_{2,3} < \Delta_h \text{ for all } h\in\{1, \dots, H\}, \\
\label{hd4}
\limsup_{T \to \infty}\Prob(\bar p_{H,n,K,T, \rel} < \alpha) \le \alpha & \quad \text{ if } \bar H_0^{(H,\Delta)} \cap R \text{ is met,} \\
 \nonumber 
\lim_{T \to \infty}\Prob(\bar p_{H,n,K,T, \rel} < \alpha)=1 &\quad \text{ else},
\end{align}
where $R$ denotes the set of all models from the null hypothesis $ \bar H_0^{(H,\Delta)} $ for which $\|M_h\|_{2,3} = \Delta_h$  for some $h\in\{1, \dots, H\}$  and for which $\var(\langle M_h, \tilde B_h\rangle) >0$ for each such  $h$. In \eqref{hd4}, the value $\alpha$ is attained if $\|M_h\|_{2,3} = \Delta_h$ for all $h\in\{1, \dots, H\}$.

\item[(ii)]If Condition~\ref{cond:bootstrap} (B1) is met and if $\cov(X_0^{\scs (0)}, X_h^{\scs (0)})=\cov(X_0^{\scs (w)}, X_h^{\scs (w)})$ for any $w\in[0,1]$ and $h\in\N_0$, then the same assertions as in (i) are met for $n=T$.
\end{enumerate}
 \end{corollary}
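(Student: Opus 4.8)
The plan is to derive all four assertions from Theorems~\ref{ConvM} and~\ref{theo:boot} and Corollary~\ref{ConvM2}, combined with standard arguments on the asymptotics of bootstrap $p$-values. Two auxiliary facts will be used repeatedly. First, since the multipliers $\{R_i^{(k)}\}_{i,k}$ are i.i.d.\ and independent of the data, the joint unconditional convergence in Theorem~\ref{theo:boot} for every fixed $K$ is equivalent to conditional weak convergence in probability of $\hat{\Bb}_{n,T}^{(1)}$ (resp.\ $\hat{\Bb}_{T,T}^{(1)}$) to $\tilde B$ given $\{(X_{t,T})_{t\in\Z}:T\in\N\}$; consequently, for any continuous functional $\Psi$, the conditional distribution function $x\mapsto\Prob^*\big(\Psi(\hat{\Bb}_{n,T}^{(1)})\le x\big)$ converges, uniformly and in probability, to the limit distribution function $F_\Psi$ of $\Psi(\tilde B)$ whenever $F_\Psi$ is continuous (a Pólya-type argument). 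Second, if $K=K_T\to\infty$, the bootstrap average $\frac1K\sum_{k=1}^K\id\{\Psi(\hat{\Bb}_{n,T}^{(k)})\ge y\}$ has conditional variance at most $1/(4K_T)=o(1)$ and hence, in probability, is asymptotically equivalent to $\Prob^*\big(\Psi(\hat{\Bb}_{n,T}^{(1)})\ge y\big)$; this is what permits the passage from the finite-$K$ limits of Theorem~\ref{theo:boot} to the regime $K=K_T\to\infty$.

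\emph{Classical hypotheses.} Under $\bar H_0^{(H)}$ one has $M_h\equiv0$ for $h\le H$ (a centred, serially uncorrelated approximating family has vanishing lagged product moments), so $\bar{\mathcal S}_{H,T}=\max_{h\le H}\|B_{h,T}\|_{2,3}$, where $B_{h,T}=\sqrt T(\hat M_{h,T}-M_h)$ denotes the $h$-th component of $\hat{\Bb}_T$ and equals $\sqrt T\hat M_{h,T}$ under $\bar H_0^{(H)}$. Applying the continuous map $(\mathbf b_0,\dots,\mathbf b_K)\mapsto\big(\max_h\|b_{0,h}\|_{2,3},\dots,\max_h\|b_{K,h}\|_{2,3}\big)$ to the convergence in Theorem~\ref{theo:boot} gives
\[
(\bar{\mathcal S}_{H,T},\bar{\mathcal S}_{H,n,T}^{(1)},\dots,\bar{\mathcal S}_{H,n,T}^{(K)})\convw(\mathcal Z^{(0)},\mathcal Z^{(1)},\dots,\mathcal Z^{(K)}),
\]
with $\mathcal Z^{(0)},\dots,\mathcal Z^{(K)}$ i.i.d.\ copies of $\mathcal Z:=\max_{h\le H}\|\tilde B_h\|_{2,3}$. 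If $C_{\Bb}\ne0$, the law of $\mathcal Z$ is atomless (the norm of a nondegenerate Gaussian element of a Hilbert space has a density, and $\{\max_h\|\tilde B_h\|_{2,3}=c\}\subset\bigcup_h\{\|\tilde B_h\|_{2,3}=c\}$); combining the two auxiliary facts with the probability integral transform then yields $\bar p_{H,n,K,T}\convw 1-F_\Psi(\mathcal Z^{(0)})$, which is $\mathrm{Unif}[0,1]$-distributed, so $\Prob(\bar p_{H,n,K,T}<\alpha)\to\alpha$ (if $C_{\Bb}=0$ then $\bar{\mathcal S}_{H,T}\convp0$, $\bar p_{H,n,K,T}\convp1$ and the test is trivially conservative). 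Under an alternative there is $h_0\le H$ with $\|M_{h_0}\|_{2,3}>0$; Theorem~\ref{ConvM} gives $\bar{\mathcal S}_{H,T}\ge\mathcal S_{h_0,T}\convw+\infty$, whereas $\bar{\mathcal S}_{H,n,T}^{(k)}=O_\Prob(1)$ by Theorem~\ref{theo:boot} (which does not presuppose any null hypothesis); hence $\ex^*[\bar p_{H,n,K,T}]=\Prob^*\big(\bar{\mathcal S}_{H,n,T}^{(1)}\ge\bar{\mathcal S}_{H,T}\big)\convp0$, whence $\bar p_{H,n,K,T}\convp0$ and $\Prob(\bar p_{H,n,K,T}<\alpha)\to1$.

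\emph{Relevant hypotheses.} Consistency against $\|M_{h_0}\|_{2,3}>\Delta_{h_0}$ follows exactly as above from $\bar{\mathcal S}_{H,\Delta,T}\ge\mathcal S_{h_0,\Delta_{h_0},T}\convw+\infty$ (Corollary~\ref{ConvM2}) and $\bar{\mathcal S}_{H,n,T,\rel}^{(k)}=\max_h\langle\hat M_{h,T},\hat{\Bb}_{h,n,T}^{(k)}\rangle=O_\Prob(1)$. Under the null $\bar H_0^{(H,\Delta)}$ the decisive device is a convexity estimate: by convexity of $\|\cdot\|_{2,3}$ (subgradient inequality at $\hat M_{h,T}$, the case $\hat M_{h,T}=0$ being trivial) and the inequality $\Delta_h\ge\|M_h\|_{2,3}$,
\begin{align*}
\sqrt T\big(\|\hat M_{h,T}\|_{2,3}-\Delta_h\big)\|\hat M_{h,T}\|_{2,3}
&\le \sqrt T\big(\|\hat M_{h,T}\|_{2,3}-\|M_h\|_{2,3}\big)\|\hat M_{h,T}\|_{2,3}\\
&\le \langle\hat M_{h,T},B_{h,T}\rangle,
\end{align*}
so that $\bar{\mathcal S}_{H,\Delta,T}\le\max_{h\le H}\langle\hat M_{h,T},B_{h,T}\rangle$; by Corollary~\ref{ConvM2}, Theorem~\ref{theo:boot} and $\hat M_{h,T}\convp M_h$ in $L^2$, the right-hand side and each $\bar{\mathcal S}_{H,n,T,\rel}^{(k)}$ converge jointly to i.i.d.\ copies of $\mathcal V:=\max_{h\le H}\langle M_h,\tilde B_h\rangle$. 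Hence $\bar p_{H,n,K,T,\rel}$ dominates the bootstrap $p$-value $\frac1K\sum_k\id\{\bar{\mathcal S}_{H,n,T,\rel}^{(k)}\ge\max_h\langle\hat M_{h,T},B_{h,T}\rangle\}$, whose underlying statistic and replications converge jointly to i.i.d.\ copies of $\mathcal V$; on $\bar H_0^{(H,\Delta)}\cap R$ the variable $\mathcal V$ is atomless because $\var(\langle M_h,\tilde B_h\rangle)>0$ for the active lags $h\in N_H$, so this $p$-value converges, with $K=K_T\to\infty$, weakly to $\mathrm{Unif}[0,1]$, whence $\limsup_T\Prob(\bar p_{H,n,K,T,\rel}<\alpha)\le\alpha$. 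When $\|M_h\|_{2,3}=\Delta_h$ for all $h$, the first inequality above is an equality and an expansion of $\|\cdot\|_{2,3}$ shows $\bar{\mathcal S}_{H,\Delta,T}$ and $\max_h\langle\hat M_{h,T},B_{h,T}\rangle$ differ by $o_\Prob(1)$, so $(\bar{\mathcal S}_{H,\Delta,T},\bar{\mathcal S}_{H,n,T,\rel}^{(1)},\dots)$ itself converges to i.i.d.\ copies of the atomless $\mathcal V$ and the value $\alpha$ is attained exactly. Finally, if $\|M_h\|_{2,3}<\Delta_h$ for every $h$, Corollary~\ref{ConvM2} shows $\bar{\mathcal S}_{H,\Delta,T}$ converges in distribution to a nonpositive random variable (or to $-\infty$), whereas, conditionally on the data, each $\bar{\mathcal S}_{H,n,T,\rel}^{(k)}$ is a centred real Gaussian, hence nonnegative with conditional probability at least $1/2$; therefore $\ex^*[\bar p_{H,n,K,T,\rel}]\ge\tfrac12-o_\Prob(1)$, and using $\var^*[\bar p_{H,n,K,T,\rel}]=o_\Prob(1)$ together with $\alpha<1/2$ we get $\Prob(\bar p_{H,n,K,T,\rel}<\alpha)\to0$. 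Part~(ii) is proved identically, with $n=T$, invoking Theorem~\ref{theo:boot}(ii) in place of Theorem~\ref{theo:boot}(i).

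I expect the main difficulty to be twofold. First, making the interchange of the limits $T\to\infty$ and $K=K_T\to\infty$ rigorous: this rests on the equivalence between joint unconditional and conditional-in-probability weak convergence of the bootstrap, together with uniform (Pólya-type) control of the conditional distribution functions, and this apparatus has to be deployed separately in the various regimes (null, boundary of the relevant null, interior, alternative). Second, on the boundary of the relevant null one must identify the correct limit of $\bar{\mathcal S}_{H,\Delta,T}$ from Corollary~\ref{ConvM2} and line it up against an asymptotically exchangeable family of bootstrap statistics; the convexity estimate displayed above is the natural tool here, and the assumptions built into the set $R$ (positive asymptotic variances at the active lags) and the restriction $\alpha<1/2$ are precisely what rules out the degenerate limits that would otherwise obstruct, respectively, the boundary and the interior conclusions.
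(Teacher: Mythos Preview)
Your argument is correct in outline and is considerably more explicit than the paper's proof, which simply invokes Corollary~4.3 of \cite{BucKoj19} for the classical case and then says the relevant case follows ``by similar arguments, observing that the weak limit of $\bar{\mathcal S}_{H,\Delta,T}$ is stochastically bounded by $\max_{h=1}^H\langle M_h,\tilde B_h\rangle$ on the positive real line''. The main methodological difference is that, for the relevant null, you obtain the domination $\bar{\mathcal S}_{H,\Delta,T}\le\max_h\langle\hat M_{h,T},B_{h,T}\rangle$ directly via the subgradient inequality for $\|\cdot\|_{2,3}$, whereas the paper extracts the analogous stochastic bound from the limit in Corollary~\ref{ConvM2}; your route is finite-sample and self-contained, the paper's is shorter once Corollary~\ref{ConvM2} is in hand.

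Two small inaccuracies deserve tightening. First, on $\bar H_0^{(H,\Delta)}\cap R$ the variable $\mathcal V=\max_{h\le H}\langle M_h,\tilde B_h\rangle$ need not be atomless: if some $M_h=0$ (which is compatible with $R$), then $\mathcal V$ may have an atom at $0$. What is true is that $\mathcal V$ is atomless on $(0,\infty)$ and, since $R$ guarantees at least one $\langle M_h,\tilde B_h\rangle$ is a nondegenerate centred Gaussian, $\Prob(\mathcal V>0)\ge 1/2$; hence for $\alpha<1/2$ the $(1-\alpha)$-quantile of $\mathcal V$ lies in the continuous region and the $\limsup\le\alpha$ conclusion still follows (alternatively, appeal to the general super-uniformity of $p$-values built from asymptotically exchangeable replicates). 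Second, $\bar{\mathcal S}_{H,n,T,\rel}^{(k)}$ is not a centred Gaussian conditionally on the data---it is the maximum of $H$ such Gaussians---but your needed inequality $\Prob^*(\bar{\mathcal S}_{H,n,T,\rel}^{(k)}\ge 0)\ge 1/2$ is immediate from $\max_h\langle\hat M_{h,T},\hat{\Bb}_{h,n,T}^{(k)}\rangle\ge\langle\hat M_{1,T},\hat{\Bb}_{1,n,T}^{(k)}\rangle$. With these two corrections your proof goes through.
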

 
The restriction to $\alpha<1/2$ for the test defined by \eqref{reltest}  is needed to make sure that the contribution from $\max_{h\in O_H} - \Delta_h \|\tilde B_h\|_{2,3}$ in Corollary~\ref{ConvM2} is negligible (see Section \ref{sec:proofs} for details).

\section{Monte Carlo Simulations} \label{sec:sim}

A large scale Monte Carlo simulation study was performed to analyse the finite-sample properties of the proposed tests. The major goal of the study was to analyse the level approximation and the power of the tests for hypotheses of the form $\bar{H}_0^{\scs (H)}$ and $\bar{H}_0^{\scs (H,\Delta)}$, with $H \in \{1,\dots, 4\}$.  Moreover, we also provide 
 a comparison with existing tests for white  noise / no serial correlation in the stationary setup, both for tests in the time domain \citep{kokoszka2017} and in the frequency domain \citep{zhang2016, bagchi2018,characiejus2020}. 

\subsection{Models}
We start by employing the same (stationary) models as in \cite{zhang2016} and \cite{bagchi2018}. 
In particular, for the null hypothesis of serial uncorrelation for any lag $h$, we consider: Model ($\mathrm{N}_1$), an i.i.d.\ sequence of Brownian motions; Model ($\mathrm{N}_2$),  an i.i.d.\ sequence of Brownian bridges; and Model ($\mathrm{N}_3$), data from a FARCH(1) process defined by
$$ X_{t}(\tau)=B_t(\tau) \sqrt{\tau + \int_0^1 c_\psi \exp\Big(\frac{\tau^2+\sigma^2}{2}\Big)X_{t-1}^2(\sigma)\diff \sigma}, $$
where $(B_t)_{t\in\Z}$ denotes an i.i.d.\ sequence of Brownian motions and $c_\psi = 0.3418$. Under the alternative, we consider the FAR(1) model given by
$$X_t
= \rho(X_{t-1}-\mu)+\eps_t,$$
where $\rho$ denotes an integral operator $\rho(f) = \int_0^1 K(\cdot,\sigma)f(\sigma)\diff \sigma,~f\in L^2([0,1])$, for a given kernel $K \in L^2([0,1]^2)$ and a sequence of centred, i.i.d. innovations $(\eps_t)_{t\in\Z}$ in $L^2([0,1])$. We consider the following choices for $K$ and $\eps_t$:
\begin{align*}
(\mathrm{A}_1) \quad &~ K(\tau,\sigma)=c_g \exp\big((\tau^2+\sigma^2)/2\big),~ \eps_t~\text{i.i.d. Brownian motions},\\
(\mathrm{A}_2) \quad &~ K(\tau,\sigma)=c_g \exp\big((\tau^2+\sigma^2)/2\big),~ \eps_t~\text{i.i.d. Brownian bridges}, \\
(\mathrm{A}_3) \quad &~ K(\tau,\sigma)=c_w \min(\tau,\sigma),~ \eps_t~\text{i.i.d. Brownian motions}, \\
(\mathrm{A}_4) \quad &~ K(\tau,\sigma)=c_w \min(\tau,\sigma),~ \eps_t~\text{i.i.d. Brownian bridges},
\end{align*}
where $c_g$ and $c_w$ are chosen such that the Hilbert-Schmidt norm of the $\rho$ is 0.3.

Note that the above models are stationary. Since our proposed methodology allows for smooth changes in the distribution of the underlying stochastic processes as well, we additionally consider the following heteroscedastic locally stationary models: 
\begin{align*}
(\mathrm{N}_4) \quad &~ X_{t,T} = \sigma(t/T) B_t, \\
(\mathrm{A}_5) \quad &~ X_{t,T} = \rho(X_{t-1,T})+\sigma(t/T) B_t,  \\
(\mathrm{A}_6)  \quad &~ X_{t,T} = \sigma(t/T)\rho(X_{t-1,T})+ B_t,
\end{align*}
where $(B_t)_{t\in\Z}$ denotes an i.i.d. sequence of Brownian motions, $\sigma(x)=x+1/2$ and $\rho$ is defined as in model ($\mathrm{A}_1$). For model ($\mathrm{N}_4$), the null hypothesis holds true, whereas the alternative is true for models ($\mathrm{A}_5$) and ($\mathrm{A}_6$).

\subsection{Details on the implementation}
For the comparison with the tests by \cite{zhang2016} and \cite{bagchi2018} (results in Table~\ref{table:zb}) and the evaluation of the finite-sample properties under non-stationarity (results in Tables~\ref{table:loc} and ~\ref{table:rel}), the data was simulated on an equidistant grid of size 1000 on the interval $[0,1]$. For the comparison with the tests by 
\cite{kokoszka2017} and \cite{characiejus2020} (results in Table~\ref{table:kc}), the size of the grid was chosen as 100 to accommodate the computational complexity of the tests. For the latter two tests, we relied on their implementation in the R-package \texttt{wwntests} by \cite{wwntests}.

For computational reasons, we reduced the dimension by projecting the generated data onto the subspace of $L^2([0,1])$ spanned by the first $D=17$ functions of the Fourier basis $\{\psi_n\}_{n\in\N_0}$, where, for $n\in\N$,
$$ \psi_0\equiv 1,\quad \psi_{2n-1}(\tau)=\sqrt{2}\sin(2\pi n\tau),\quad \psi_{2n}(\tau)=\sqrt{2}\cos(2\pi n\tau) $$
to calculate the proposed test statistic.

For the calculation of the bootstrap quantiles, we employed the data driven choice of the block length $m$ explained in \cite{Buecher2020}. %
In the context of stationary processes (models $(\mathrm{N}_1)$--$(\mathrm{N}_3)$ and $(\mathrm{A}_1)$--$(\mathrm{A}_4)$), it is natural to consider global estimators in the bootstrap procedure
and  we  chose the bandwidth $n=T$. In fact, preliminary simulations suggested that this choice of $n$ leads to better finite sample behavior. For the non-stationary models however, 
this choice is not reasonable and we  used local estimators in order to avoid a possible bias. In this setting, we chose the bandwidth $n=\lfloor T^{2/3}\rfloor$, satisfying Condition \ref{cond:bootstrap} (B2). The number of bootstrap replicates was chosen as $200$ and each model was simulated 1000 times.

{\scriptsize 
\begin{table}[t!]
		\begin{tabular}{c | rrrr  rr}
			 Model & $\bar{H}_0^{(1)}$ & $\bar{H}_0^{(2)}$ & $\bar{H}_0^{(3)}$ & $\bar{H}_0^{(4)}$ &  (B)  & (Z)  \\ 
			\hline \hline
			\addlinespace[.2cm]
		\multicolumn{7}{l}{\quad\textit{Panel A: $T=128$}} \\ \hline
			$(\mathrm{N}_1)$ & 7.3 & 6.3 & 5.9 & 5.6 & 1.8 & 4.2 \\ 
			$(\mathrm{N}_2)$ & 4.9 & 4.4 & 4.2 & 4.2 & 1.1 & 5.4 \\ 
			$(\mathrm{N}_3)$ & 5.4 & 4.5 & 4.2 & 3.8 & 4.7 & 5.9 \\ 
			$(\mathrm{A}_1)$ & 99.8 & 99.5 & 99.3 & 99.1 & 66.5 & 83.7 \\ 
			$(\mathrm{A}_2)$ & 98.4 & 97.9 & 97.1 & 96.5 & 51.7 & 83.1 \\ 
			$(\mathrm{A}_3)$ & 99.8 & 99.7 & 99.7 & 99.7 & 84.9 & 68.3 \\ 
			$(\mathrm{A}_4)$ & 91.8 & 88.5 & 85.7 & 82.7 & 37.0 & 65.8 \\ 
			 \hline\addlinespace[.2cm]
		\multicolumn{7}{l}{\quad\textit{Panel B: $T=256$}} \\ \hline
			$(\mathrm{N}_1)$ & 5.3 & 6.1 & 5.9 & 5.1 & 1.9 & 4.2 \\ 
			$(\mathrm{N}_2)$ & 4.4 & 5.3 & 4.8 & 4.2 & 1.4 & 4.8 \\ 
			$(\mathrm{N}_3)$ & 5.0 & 4.4 & 4.0 & 4.0 & 6.0 & 5.2 \\ 
			$(\mathrm{A}_1)$ & 100.0 & 100.0 & 100.0 & 100.0 & 91.5 & 99.2 \\ 
			$(\mathrm{A}_2)$ & 99.9 & 99.9 & 99.9 & 99.9 & 84.4 & 99.5 \\ 
			$(\mathrm{A}_3)$ & 100.0 & 100.0 & 100.0 & 100.0 & 99.1 & 98.2 \\ 
			$(\mathrm{A}_4)$ & 99.6 & 99.4 & 99.3 & 99.2 & 65.9 & 99.1 \\ 
			 \hline
		\end{tabular}   \medskip
		\hspace{.1cm}
		\begin{tabular}{  rrrr  rr}
			  $\bar{H}_0^{(1)}$ & $\bar{H}_0^{(2)}$ & $\bar{H}_0^{(3)}$ & $\bar{H}_0^{(4)}$ &  (B)  & (Z)  \\ 
			\hline \hline
			\addlinespace[.2cm]
		\multicolumn{6}{l}{\quad\textit{Panel C: $T=512$}} \\ \hline
			 6.1 & 5.4 & 6.0 & 5.1 & 2.8 & 4.7 \\ 
			 5.9 & 4.8 & 4.6 & 4.8 & 1.9 & 5.9 \\ 
			 4.3 & 5.0 & 4.7 & 4.0 & 6.3 & 4.9 \\ 
			 100.0 & 100.0 & 100.0 & 100.0 & 99.3 & 99.5 \\ 
			 100.0 & 100.0 & 100.0 & 100.0 & 98.3 & 99.8 \\ 
			 100.0 & 100.0 & 100.0 & 100.0 & 100.0 & 98.7 \\ 
			 100.0 & 100.0 & 100.0 & 100.0 & 90.4 & 100.0 \\
			\hline \addlinespace[.2cm]
		\multicolumn{6}{l}{\quad\textit{Panel D: $T=1024$}} \\ \hline
			 5.5 & 6.3 & 5.7 & 5.4 & 3.5 & 4.9 \\ 
			 5.5 & 5.7 & 5.3 & 5.4 & 3.5 & 5.1 \\ 
			 4.3 & 3.9 & 3.7 & 3.6 & 7.6 & 4.8 \\ 
			 100.0 & 100.0 & 100.0 & 100.0 & 100.0 & 100.0 \\ 
			 100.0 & 100.0 & 100.0 & 100.0 & 99.9 & 99.8 \\ 
			 100.0 & 100.0 & 100.0 & 100.0 & 100.0 & 100.0 \\ 
			 100.0 & 100.0 & 100.0 & 100.0 & 99.6 & 100.0 \\ \hline
		\end{tabular}   \medskip
		\caption{\it Empirical rejection rates  of  test  \eqref{classtest} 
for  the classical hypotheses  \eqref{eq:h02} in the case 
of stationary models, 
 for various values of the maximal lag $H$ in $\bar{H}_0^{\scs (H)}$.
The columns denoted by (B) and (Z) correspond to the tests of 
\cite{bagchi2018} and  \cite{zhang2016}, respectively.
}\label{table:zb}
	\end{table}
}

\subsection{Results for the classical hypotheses}

In the following, we denote by (B) and (Z) the tests proposed by \cite{bagchi2018} and \cite{zhang2016}, respectively. $(\mathrm{M}_H)$, $H\in\{1,2,3\}$, denotes the multiple-lag test at lag $H$ proposed by \cite{kokoszka2017}. Finally, $(\mathrm{Spec}_s)$ and $(\mathrm{Spec}_a)$ denote the spectral test as proposed by \cite{characiejus2020}, with static and adaptive bandwidth, respectively. The empirical rejection rates of test \eqref{classtest} for the stationary models  $(\mathrm{N}_1)$--$(\mathrm{N}_3)$ and $(\mathrm{A}_1)$--$(\mathrm{A}_4)$ are shown in Tables~\ref{table:zb} and \ref{table:kc}. 
We observe  that the level approximation  of the new test  \eqref{classtest} 
is very accurate for  all scenarios under consideration, and that the power is larger than for the competitors from the literature, in particular for small samples.
 A partial explanation for this observation consists in the fact that tests based in frequency  domain formulate the white noise hypothesis in terms of the spectral density operator and therefore implicitly consider the auto-covariance operators at any lag $h$. 
Although the power of test  \eqref{classtest} slightly decreases with increasing $H$, it decreases slower than the power of the multiple-lag time domain test by \cite{kokoszka2017}.
The type I errors of the tests $(\mathrm{Spec}_s)$ and $(\mathrm{Spec}_a)$ seem to exceed the level of $5\%$ for model $(\mathrm{N}_3)$. This difficulty might arise from  the fact that the data is uncorrelated but dependent. In contrast, the level approximation of the proposed tests seems to be more accurate.
 
The empirical rejection rates of test \eqref{classtest} for the locally stationary models $(\mathrm{N}_4)$, $(\mathrm{A}_5)$ and $(\mathrm{A}_6)$ are shown in  Table~\ref{table:loc}, 
for different sample sizes. We observe a reasonable approximation of the nominal level and high power under the non-stationary alternatives.

{\footnotesize
	\begin{table}{
			\begin{tabular}{l | rr rr  | rrrrr}
				Model & $\bar{H}_0^{(1)}$ & $\bar{H}_0^{(2)}$ & $\bar{H}_0^{(3)}$ & $\bar{H}_0^{(4)}$ & $(\mathrm{M}_1)$ & $(\mathrm{M}_2)$ & $(\mathrm{M}_3)$ & $(\mathrm{Spec}_{s})$ & $(\mathrm{Spec}_{a})$ \\ 
				\hline \hline
				\addlinespace[.2cm]
				\multicolumn{10}{l}{\quad\textit{Panel A: $T=100$}} \\ \hline
				$(\mathrm{N}_1)$ & 6.7 & 6.0 & 6.2 & 5.7  & 5.4 & 4.8 & 6.7 & 5.1 & 5.7 \\ 
				$(\mathrm{N}_2)$ & 5.1 & 4.5 & 4.8 & 4.4  & 3.0 & 3.5 & 3.7 & 4.3 & 5.0 \\ 
				$(\mathrm{N}_3)$ & 5.3 & 5.8 & 4.9 & 5.2  & 4.3 & 4.4 & 5.4 & 18.8 & 21.8 \\ 
				$(\mathrm{A}_1)$ & 97.5 & 96.3 & 95.4 & 94.6  & 96.6 & 92.3 & 88.0 & 100.0 & 99.7 \\ 
				$(\mathrm{A}_2)$ & 95.3 & 92.7 & 91.2 & 89.3  & 89.9 & 78.8 & 69.1 & 99.8 & 98.9 \\ 
				\hline
				\addlinespace[.2cm]
				\multicolumn{10}{l}{\quad\textit{Panel B: $T=200$}} \\ \hline
				$(\mathrm{N}_1)$ & 4.5 & 4.3 & 4.6 & 4.3 &  5.4 & 5.4 & 5.4 & 4.6 & 5.5 \\  
				$(\mathrm{N}_2)$ & 4.6 & 5.1 & 4.3 & 4.0 &  3.1 & 3.4 & 3.8 & 4.7 & 4.6 \\  
				$(\mathrm{N}_3)$ & 3.2 & 3.2 & 3.2 & 3.2 &  5.1 & 5.3 & 6.3 & 26.2 & 28.8 \\  
				$(\mathrm{A}_1)$ & 100.0 & 100.0 & 100.0 & 100.0 & 100.0 & 100.0 & 100.0 & 100.0 & 100.0 \\
				$(\mathrm{A}_2)$ & 100.0 & 99.9 & 99.9 & 99.8 & 100.0 & 100.0 & 98.7 & 100.0 & 100.0 \\  
				\hline
				\addlinespace[.2cm]
				\multicolumn{10}{l}{\quad\textit{Panel C: $T=300$}} \\ \hline
				$(\mathrm{N}_1)$ & 5.7 & 4.9 & 4.5 & 4.8 & 6.0 & 4.4 & 6.9 & 4.7 & 5.8 \\  
				$(\mathrm{N}_2)$ & 5.3 & 4.5 & 4.1 & 4.4 & 5.3 & 5.2 & 4.6 & 5.2 & 7.3 \\  
				$(\mathrm{N}_3)$ & 4.6 & 4.0 & 3.7 & 3.6 & 5.4 & 5.3 & 6.1 & 25.3 & 28.8 \\ 
				$(\mathrm{A}_1)$ & 100.0 & 100.0 & 100.0 & 100.0 & 100.0 & 100.0 & 100.0 & 100.0 & 100.0 \\ 
				$(\mathrm{A}_2)$ & 100.0 & 100.0 & 100.0 & 100.0 & 100.0 & 100.0 & 100.0 & 100.0 & 100.0
			\end{tabular}  \medskip 	
			\caption{\it Empirical rejection rates  of  test  \eqref{classtest} 
				for  the classical hypotheses  \eqref{eq:h02} in the case 
				of stationary models, 
				for various values of the maximal lag $H$ in $\bar{H}_0^{\scs (H)}$.
				The columns denoted by $(\mathrm{M}_i), i\in\{1,2,3\}$, and $(\mathrm{Spec}_i), i\in\{a,s\}$, correspond to the tests of \cite{kokoszka2017} and \cite{characiejus2020}, respectively.
			}\label{table:kc}
		}
	\end{table}
}

{\footnotesize
	\begin{table}[th!]
				\begin{tabular}{l | rr rr}
			Model & $\bar{H}_0^{(1)}$ & $\bar{H}_0^{(2)}$ & $\bar{H}_0^{(3)}$ & $\bar{H}_0^{(4)}$ \\ 
			\hline \hline
			\addlinespace[.2cm]
			\multicolumn{5}{l}{\quad\textit{Panel A: $T=128$}} \\ \hline
			$(\mathrm{N}_4)$ & 6.5 & 6.2 & 5.5 & 5.5 \\ 
			$(\mathrm{A}_5)$ & 99.0 & 98.6 & 98.6 & 98.6 \\ 
			$(\mathrm{A}_6)$ & 99.4 & 98.6 & 98.4 & 98.1 \\ 
			\hline
			\addlinespace[.2cm]
			\multicolumn{5}{l}{\quad\textit{Panel B: $T=256$}} \\ \hline
			$(\mathrm{N}_4)$ & 7.3 & 5.7 & 5.5 & 4.9 \\ 
			$(\mathrm{A}_5)$ & 100.0 & 100.0 & 100.0 & 100.0 \\ 
			$(\mathrm{A}_6)$ & 100.0 & 100.0 & 100.0 & 100.0 \\ 
			\hline
		\end{tabular}  \medskip
		\hspace{0.1cm}	\begin{tabular}{l | rr rr}
			Model & $\bar{H}_0^{(1)}$ & $\bar{H}_0^{(2)}$ & $\bar{H}_0^{(3)}$ & $\bar{H}_0^{(4)}$ \\ 
			\hline \hline
			\addlinespace[.2cm]
			\multicolumn{5}{l}{\quad\textit{Panel C: $T=512$}} \\ \hline
			$(\mathrm{N}_4)$ & 6.5 & 6.8 & 5.6 & 4.9 \\ 
			$(\mathrm{A}_5)$ & 100.0 & 100.0 & 100.0 & 100.0 \\ 
			$(\mathrm{A}_6)$ & 100.0 & 100.0 & 100.0 & 100.0 \\ 
			\hline
			\addlinespace[.2cm]
			\multicolumn{5}{l}{\quad\textit{Panel D: $T=1024$}} \\ \hline
			$(\mathrm{N}_4)$ & 6.9 & 6.5 & 6.5 & 5.0 \\ 
			$(\mathrm{A}_5)$ & 100.0 & 100.0 & 100.0 & 100.0 \\ 
			$(\mathrm{A}_6)$ & 100.0 & 100.0 & 100.0 & 100.0 \\ 
			\hline
		\end{tabular}  \medskip
				\caption{\it Empirical rejection rates  of  test  \eqref{classtest} 
			for  the classical hypotheses    \eqref{eq:h02} in the case 
			of locally stationary models, for various values for the maximal lag $H$ in $\bar{H}_0^{\scs (H)}$.
		}\label{table:loc}
	\end{table}
}
 
\subsection{Results for  relevant hypotheses}
We conclude this section with a brief discussion  of  the performance of the proposed test \eqref{reltest}
for the relevant hypotheses \eqref{hd3}. For this purpose we have calculated the quantities  $\|M_h\|_{2,3}$ for the models $(\mathrm{A}_1)$--$(\mathrm{A}_4)$
by a numerical simulation (specifically, we simulated $10,000$ time series of length $T=2,000$, projected them on a Fourier basis of dimension $D=101$, calculated for each time series the quantity $\|\hat{M}_{h,T}\|$, for $h\in\{1,\dots,4\}$, and used the respective means as an approximation for $\|M_h\|$). The results can be found in Table~\ref{tab:theoM}.
For the simulation experiment, we chose hypotheses corresponding to $\Delta=\Delta_{h,w} = w\|M_h\|_{2,3}$ with $w\in \{0.4+i/10: i=1,\dots, 11\}$ and $h=1,\dots, 4$, such that the null hypotheses are met for $w \ge 1$ and the alternative hypotheses are met for $w < 1$. The results can be found in Table \ref{table:rel}, where we omit the results for $H\in\{2,3\}$ since they are qualitatively similar to the cases $H\in\{1,4\}$. Again, we observe convincing level approximations and  good  power properties.

{\footnotesize
	\begin{table}[t!]
\begin{tabular}{c|c|c|c|c}
\text{Model} & $h=1$ & $h=2$ & $h=3$ & $h=4$ \\
\hline \hline
$(\mathrm{A}_1)$ & 0.1419 ~$\mathit{(5.00 \cdot 10^{-5})}$ & 0.0689 ~$\mathit{(3.93 \cdot 10^{-5})}$ & 0.0336 ~$\mathit{(3.41 \cdot 10^{-5})}$ & 0.0169 ~$\mathit{(2.94 \cdot 10^{-5})}$ \\	
$(\mathrm{A}_2)$ & 0.0283 ~$\mathit{(1.94 \cdot 10^{-6})}$ & 0.0138 ~$\mathit{(1.46 \cdot 10^{-6})}$ & 0.0069 ~$\mathit{(1.20 \cdot 10^{-6})}$ & 0.0037 ~$\mathit{(0.92 \cdot 10^{-6})}$\\
$(\mathrm{A}_3)$ & 0.1996 ~$\mathit{(2.51 \cdot 10^{-4})}$ & 0.1220  ~$\mathit{(2.02\cdot 10^{-4})}$ & 0.0755 ~$\mathit{(1.71 \cdot 10^{-4})}$ & 0.0468 ~$\mathit{(1.50\cdot 10^{-4})}$ \\
$(\mathrm{A}_4)$ & 0.0235 ~$\mathit{(4.40\cdot 10^{-6})}$ & 0.0117 ~$\mathit{(3.03\cdot 10^{-6})}$ & 0.0070 ~$\mathit{(2.15\cdot 10^{-6})}$ & 0.0048 ~$\mathit{(1.51 \cdot 10^{-6})} $
\end{tabular}\medskip
		\caption{\it Theoretical values of $\|M_h\|_{2,3}$, obtained by simulation. The numbers in brackets correspond to the empirical variance of the simulation.}\label{tab:theoM}
	\end{table}
}

{\footnotesize
	\begin{table}[t!]
		\begin{tabular}{ll| rrr rrr rrr rr}
		Model & $H\setminus w$ & 0.5 & 0.6 & 0.7 & 0.8 & 0.9 & \bf 1.0 & 1.1 & 1.2 & 1.3 & 1.4 & 1.5 \\ 
		\hline \hline 
		\addlinespace[.2cm]
		\multicolumn{7}{l}{\quad\textit{Panel A: $T=128$}} \\ \hline
		$(\mathrm{A}_1)$ & 1 & 58.1 & 39.4 & 25.2 & 13.2 & 8.1 & \bf 4.4 & 1.8 & 0.9 & 0.2 & 0.0 & 0.0 \\ 
		& 4 & 58.6 & 40.3 & 26.1 & 14.3 & 9.3 & \bf 5.3 & 2.6 & 1.3 & 0.6 & 0.3 & 0.2 \\ 
		$(\mathrm{A}_2)$ & 1 & 72.7 & 50.3 & 31.0 & 16.7 & 8.8 & \bf 3.9 & 2.2 & 1.3 & 0.3 & 0.2 & 0.2 \\ 
		& 4 & 75.0 & 53.7 & 35.3 & 20.2 & 11.7 & \bf 6.2 & 4.2 & 3.0 & 1.9 & 1.8 & 1.5 \\ 
		$(\mathrm{A}_3)$ & 1 & 57.9 & 40.1 & 22.6 & 13.2 & 6.9 & \bf 3.6 & 1.8 & 0.6 & 0.0 & 0.0 & 0.0 \\ 
		& 4 & 57.5 & 40.4 & 22.6 & 13.6 & 7.1 & \bf 3.9 & 2.0 & 0.7 & 0.0 & 0.0 & 0.0 \\ 
		$(\mathrm{A}_4)$ & 1 & 61.0 & 42.3 & 27.2 & 15.7 & 7.7 & \bf 4.7 & 3.0 & 1.4 & 0.7 & 0.2 & 0.2 \\ 
		& 4 & 67.7 & 49.2 & 34.2 & 21.6 & 12.4 & \bf 8.1 & 5.4 & 3.3 & 2.4 & 2.0 & 1.4 \\ 
		\hline
		\addlinespace[.2cm]
		\multicolumn{7}{l}{\quad\textit{Panel B: $T=256$}} \\ \hline
		$(\mathrm{A}_1)$ & 1 & 87.4 & 66.4 & 42.6 & 21.8 & 10.7 & \bf  4.4 & 1.4 & 0.2 & 0.0 & 0.0 & 0.0 \\ 
		& 4 & 87.3 & 66.3 & 42.8 & 22.0 & 10.9 & \bf 4.5 & 1.4 & 0.2 & 0.0 & 0.0 & 0.0 \\ 
		$(\mathrm{A}_2)$ & 1 & 91.7 & 75.7 & 51.9 & 28.4 & 12.5 & \bf 5.4 & 1.8 & 0.4 & 0.0 & 0.0 & 0.0 \\ 
		& 4 & 92.0 & 76.0 & 52.7 & 29.1 & 12.9 & \bf 5.8 & 2.2 & 0.5 & 0.1 & 0.1 & 0.1 \\ 
		$(\mathrm{A}_3)$ & 1 & 87.1 & 65.5 & 41.1 & 20.8 & 8.9 & \bf 3.6 & 0.9 & 0.2 & 0.0 & 0.0 & 0.0 \\ 
		& 4 & 87.1 & 65.6 & 41.2 & 20.8 & 8.9 & \bf 3.6 & 0.9 & 0.2 & 0.0 & 0.0 & 0.0 \\ 
		$(\mathrm{A}_4)$ & 1 & 82.8 & 64.0 & 41.9 & 22.8 & 12.6 & \bf 5.9 & 1.8 & 0.5 & 0.0 & 0.0 & 0.0 \\ 
		& 4 & 83.5 & 66.1 & 43.6 & 23.8 & 13.5 & \bf 6.7 & 2.4 & 0.8 & 0.1 & 0.1 & 0.1 \\ 
		\hline
		\addlinespace[.2cm]
		\multicolumn{7}{l}{\quad\textit{Panel C: $T=512$}} \\ \hline
		$(\mathrm{A}_1)$ & 1 & 97.4 & 88.6 & 68.2 & 39.8 & 16.6 & \bf 5.3 & 1.0 & 0.3 & 0.0 & 0.0 & 0.0 \\ 
		& 4 & 97.4 & 88.6 & 68.3 & 39.8 & 16.8 & \bf 5.5 & 1.2 & 0.5 & 0.0 & 0.0 & 0.0 \\ 
		$(\mathrm{A}_2)$ & 1 & 99.7 & 95.2 & 73.5 & 40.2 & 16.3 & \bf 4.4 & 1.0 & 0.2 & 0.0 & 0.0 & 0.0 \\ 
		& 4 & 99.7 & 95.2 & 73.3 & 40.4 & 16.5 & \bf 4.5 & 0.9 & 0.2 & 0.0 & 0.0 & 0.0 \\ 
		$(\mathrm{A}_3)$ & 1 & 98.1 & 88.2 & 64.1 & 37.0 & 13.4 & \bf 4.2 & 0.6 & 0.2 & 0.0 & 0.0 & 0.0 \\ 
		& 4 & 98.1 & 88.2 & 64.0 & 37.1 & 13.5 & \bf 4.2 & 0.7 & 0.3 & 0.0 & 0.0 & 0.0 \\ 
		$(\mathrm{A}_4)$ & 1 & 97.3 & 87.1 & 60.1 & 32.9 & 14.2 & \bf 4.1 & 1.0 & 0.2 & 0.0 & 0.0 & 0.0 \\ 
		& 4 & 97.3 & 87.3 & 60.0 & 33.1 & 14.6 & \bf 4.0 & 0.9 & 0.2 & 0.0 & 0.0 & 0.0 \\ 
		\hline
		\addlinespace[.2cm]
		\multicolumn{7}{l}{\quad\textit{Panel D: $T=1024$}} \\ \hline
		$(\mathrm{A}_1)$ & 1 & 100.0 & 99.7 & 89.6 & 60.3 & 23.8 & \bf  5.1 & 0.7 & 0.2 & 0.0 & 0.0 & 0.0 \\ 
		& 4 & 100.0 & 99.7 & 89.6 & 60.3 & 23.8 & \bf 5.1 & 0.8 & 0.2 & 0.0 & 0.0 & 0.0 \\ 
		$(\mathrm{A}_2)$ & 1 & 100.0 & 99.8 & 93.9 & 64.8 & 25.9 & \bf 5.0 & 0.5 & 0.0 & 0.0 & 0.0 & 0.0 \\ 
		& 4 & 100.0 & 99.8 & 93.9 & 64.7 & 26.0 & \bf 5.1 & 0.6 & 0.0 & 0.0 & 0.0 & 0.0 \\ 
		$(\mathrm{A}_3)$ & 1 & 100.0 & 99.6 & 87.0 & 53.8 & 18.6 & \bf 3.6 & 0.5 & 0.1 & 0.0 & 0.0 & 0.0 \\ 
		& 4 & 100.0 & 99.6 & 87.0 & 53.8 & 18.6 & \bf 3.6 & 0.6 & 0.1 & 0.0 & 0.0 & 0.0 \\ 
		$(\mathrm{A}_4)$ & 1 & 100.0 & 98.2 & 85.5 & 52.5 & 21.2 & \bf 5.0 & 0.6 & 0.0 & 0.0 & 0.0 & 0.0 \\ 
		& 4 & 100.0 & 98.2 & 85.5 & 52.6 & 21.2 & \bf 5.2 & 0.7 & 0.0 & 0.0 & 0.0 & 0.0 \\ 
		\hline
		\end{tabular}   \medskip
		\caption{\it Empirical rejection rates of the test 
		\eqref{reltest}   for  the relevant hypotheses    \eqref{hd3} in the case of 
	stationary models.} \label{table:rel} 
	\end{table}  
}

\section{Case Study} \label{sec:case}

{Functional data arises naturally when time series are recorded with a very high frequency. To illustrate the proposed methodology, we consider intraday prices of various stocks. More specifically, we consider prices over the time span from February 2016 to January 2020, where each observation corresponds to the intraday price at a given day. In particular, let $P_t(x_j)$, $t\in\{1,\dots, T\}, j\in\{1,\dots, m\}$ denote the price of a share, observed at time points $x_j$ at day $t$.  The lengths $T$ of the considered time series depend on the different stocks as for some days observations are missing. 
	
\cite{gabrys2010} define intradaily cumulative returns as
\[
R_t(x_j)= 100 \{ \log P_t(x_j) - \log P_t(x_1)   \} ,\quad  j\in\{1,\dots, m\},~ t\in\{1,\dots,T\}.
\] 
Throughout, we consider $R_t(\cdot)$ as an $L^2$-function. Some exemplary intradaily cumulative return curves are displayed in Figure~\ref{Figure1}.
The results of our testing procedure for detecting possible serial correlations can be found in Table \ref{Table6}, where we employed $K=1000$ bootstrap replicates and considered up to $H=4$ lags. The null hypotheses of serial correlation cannot be rejected at level $\alpha=0.05$, as the $p$-values clearly exceed $\alpha$. Thus, our results match the common assumption of uncorrelatedness in the literature.

	\begin{figure}[!t] 
	\centering
	\includegraphics[width=70mm]{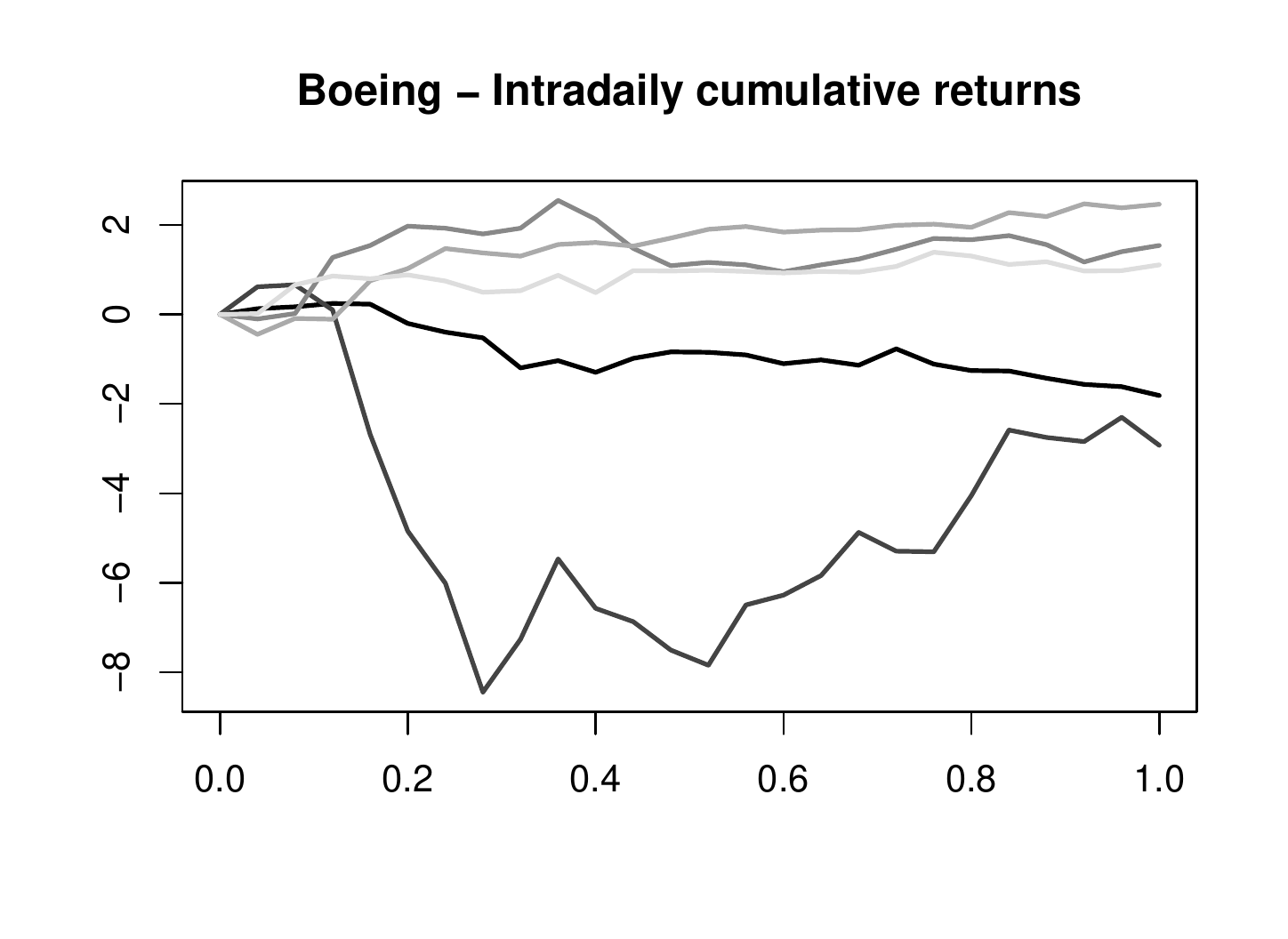}
	\includegraphics[width=70mm]{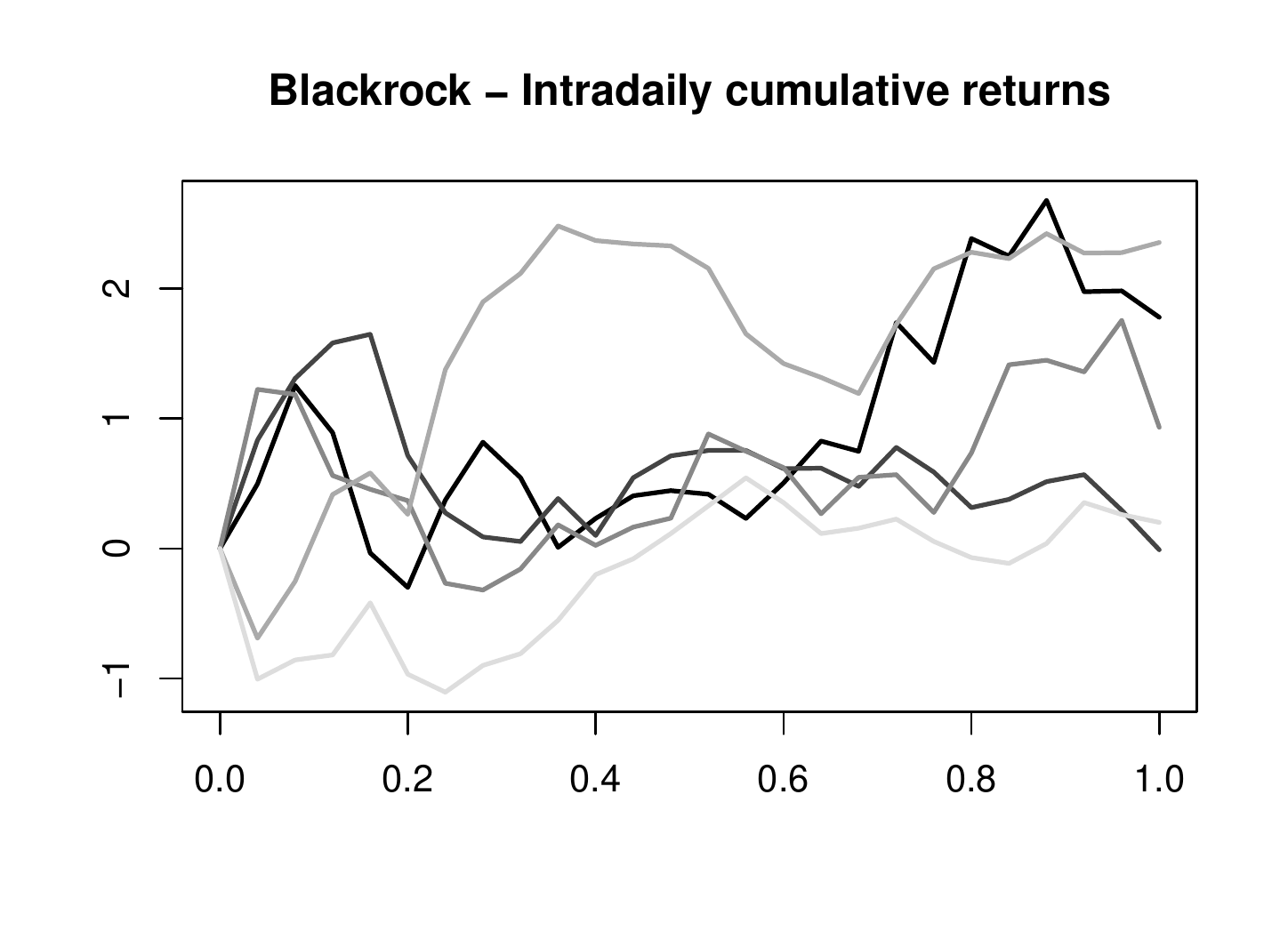}
	\vspace{-.6cm}
	\caption{ \it 
		Intradaily cumulative returns of Boeing and Blackrock from 8th to 12th of February 2016, where the x-axis corresponds to rescaled time and the y-axis denotes returns.}	\label{Figure1}
\end{figure}

{\footnotesize
	\begin{table}[t!]
	\begin{tabular}{l| rrrr|r}
		Stock & $\bar{H}_0^{(1)}$ & $\bar{H}_0^{(2)}$ & $\bar{H}_0^{(3)}$ & $\bar{H}_0^{(4)}$ & $T$ \\ 
		\hline \hline 
		Bank of America & 80.6 & 80.1 & 79.7 & 79.4 & 982 \\
		Blackrock & 98.4 & 98.3 & 98.3 & 98.3 & 822 \\
		Boeing & 82.3 & 81.9 & 81.4 & 81.1 & 984 \\
		Goldman Sachs & 74.5 & 73.9 & 73.6 & 73.3 & 990 \\
		JP Morgan & 93.3 & 93.2 & 93.0 & 92.9 & 982 \\
	\end{tabular}   \medskip
	\caption{\it $p$-values of the (combined) tests for the respective null hypotheses in percent.} \label{Table6} 
\end{table}  }
}

\section{Proofs} \label{sec:proofs}

\begin{proof}[Proof of Theorem~\ref{ConvM}]

We prove that  for any $H\in\N$ and as $T \to \infty$,
	\begin{align} \label{hd2}
	 \sqrt{T}\big( \hat{M}_{1,T}-M_1,\dots,\hat{M}_{H,T}-M_H  \big) \convw  \tilde{B} := (\tilde B_1,\dots,\tilde B_H) , 
	 \end{align}
	where $\tilde{B}$ denotes a centred Gaussian variable in $L^2([0,1]^3)^H$, with  covariance operator given by \eqref{eq:covOperator}.
The statement is then a  consequence of the continuous mapping theorem.
	
By Theorem 1 of \cite{Buecher2020}, the vector $\sqrt{T}(\hat{M}_{1,T}-\ex \hat{M}_{1,T},\dots, \hat{M}_{H,T}-\ex \hat{M}_{H,T})$ converges weakly to a vector of centred Gaussian variables $\tilde B = (\tilde B_1,\dots,\tilde B_H)$ in $L^2([0,1]^3)^H$. Thus, \eqref{hd2} follows from Slutsky's lemma, once we have shown that $\lim_{T \to\infty} \sqrt{T}\|\ex \hat{M}_{h,T}-M_h\|_{2,3}=0$ for any $h\in\N$.
For the latter purpose, invoke the triangle inequality to obtain
\begin{align*}
&\phantom{{}={}} \sqrt{T}\|\ex \hat{M}_{h,T}-M_h\|_{2,3}\\
&= \sqrt{T} \bigg(\int_0^1 \bigg\|\frac{1}{T}\sum_{t=1}^{\lfloor uT\rfloor \wedge (T-h)} \ex[X_{t,T}\otimes X_{t+h,T}]-\int_0^u \ex[X_0^{(w)}\otimes X_h^{(w)}]\diff w\bigg\|_{2,2}^2 \diff u \bigg)^{1/2}\\
&= 
\sqrt{T} \bigg(\int_0^1 \bigg\|\sum_{t=1}^{\lfloor uT\rfloor \wedge (T-h)} \int_{\frac{t-1}{T}}^{\frac{t}{T}}\ex[X_{t,T}\otimes X_{t+h,T}]-\ex[X_t^{(w)}\otimes X_{t+h}^{(w)}] \diff w \\
& \hspace{4.7cm} 
- 
\int_{T^{-1}\{\lfloor uT\rfloor \wedge (T-h)\}}^u  \ex[X_0^{(w)}\otimes X_h^{(w)}] \diff w\bigg\|_{2,2}^2 \diff u\bigg)^{1/2} \\
&\leq 
\sqrt{T} \bigg(\int_0^1 \bigg\{\sum_{t=1}^{\lfloor uT\rfloor \wedge (T-h)} \bigg\| \int_{\frac{t-1}{T}}^{\frac{t}{T}}\ex[X_{t,T}\otimes X_{t+h,T}-X_t^{(w)}\otimes X_{t+h}^{(w)}]\diff w\bigg\|_{2,2} \\
& \hspace{4.7cm} 
+ \bigg\| \int_{T^{-1}\{\lfloor uT\rfloor \wedge (T-h)\}}^u  \ex[X_0^{(w)}\otimes X_h^{(w)}] \diff w\bigg\|_{2,2}
\bigg\}^2 \diff u \bigg)^{1/2}.
\end{align*}
The integral from $T^{-1}\{\lfloor uT\rfloor \wedge (T-h)\}$ to $u$ at the right-hand side is of order $1/T$.
Further, by Jensen's inequality and local stationarity,
\begin{multline*}
\bigg\|\int_{\frac{t-1}{T}}^{\frac{t}{T}} \ex[X_{t,T}\otimes X_{t+h,T}-X_t^{(w)}\otimes X_{t+h}^{(w)}]\diff w\bigg\|_{2,2}\\
\leq \int_{\frac{t-1}{T}}^{\frac{t}{T}}\|\ex[X_{t,T}\otimes X_{t+h,T}-X_t^{(w)}\otimes X_{t+h}^{(w)}]\|_{2,2} \diff w \leq \frac{C}{T^2}
\end{multline*}
for some constant $C>0$.
Thus, it follows 
\begin{equation*} 
\sqrt{T}\|\ex \hat{M}_{h,T}-M_h\|_{2,3} = O(T^	{-1/2}),
\end{equation*} 
which completes the proof of  the theorem.
\end{proof}

\begin{proof}[Proof of Corollary~\ref{ConvM2}]
If $\|M_h\|_{2,3} = 0$ for some $h \in \{1, \dots H\}$, then $\sqrt{T}(\|\hat{M}_{h,T}\|_{2,3}-\|M_h\|_{2,3})\|\hat{M}_{h,T}\|_{2,3}$ converges to zero in probability by  Theorem~\ref{ConvM} and Slutsky's lemma. Hence, it is sufficient to assume that $\|M_h\|_{2,3} \ne 0$ for all $h \in \{1, \dots H\}$. We then obtain 
\begin{equation*} 
\sqrt{T}(\|\hat{M}_{h,T}\|_{2,3}-\|M_h\|_{2,3})_{h=1,\dots,H}\convw\bigg(\frac{\langle M_h,\tilde{B}_h\rangle}{\|M_h\|_{2,3}}\bigg)_{h=1,\dots,H}, 
\end{equation*}
from the functional delta method (Theorem 3.9.4  in \citealp{VanWel96}), applied to the functional in Proposition \ref{HadamardDiff} below. Apply Slutsky's lemma to conclude.
\end{proof}

\begin{proposition}
	\label{HadamardDiff}
	The function $\Phi:=\|\cdot\|_{2,3}$ from $L^2([0,1]^3)$ to $\R$ is Hadamard-differentia\-ble in any $M$ with $\|M\|_{2,3}>0$, with derivative $\Phi'_M(h)=\tfrac{\langle M,h\rangle}{\|M\|_{2,3}}$ in direction $h\in L^2([0,1]^3)$.
\end{proposition}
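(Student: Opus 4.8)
The plan is to verify Hadamard differentiability of the norm functional $\Phi = \|\cdot\|_{2,3}$ directly from the definition. Recall that $\Phi$ is Hadamard-differentiable at $M$ tangentially to all of $L^2([0,1]^3)$ with derivative $\Phi'_M$ if, for every $h \in L^2([0,1]^3)$ and every sequences $t_n \to 0$ in $\R \setminus \{0\}$ and $h_n \to h$ in $L^2([0,1]^3)$, one has $(\Phi(M + t_n h_n) - \Phi(M))/t_n \to \Phi'_M(h)$. So I would fix such sequences and compute the difference quotient.

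The key computation relies on writing the squared norm and exploiting the inner-product structure of $L^2([0,1]^3)$. First I would expand
\[
\|M + t_n h_n\|_{2,3}^2 = \|M\|_{2,3}^2 + 2 t_n \langle M, h_n\rangle + t_n^2 \|h_n\|_{2,3}^2.
\]
Then, using the elementary identity $a - b = (a^2 - b^2)/(a+b)$ valid for $a, b > 0$, applied with $a = \|M + t_n h_n\|_{2,3}$ and $b = \|M\|_{2,3}$ (which is legitimate for $n$ large enough, since $\|M\|_{2,3} > 0$ forces $\|M + t_n h_n\|_{2,3} > 0$ eventually), I obtain
\[
\frac{\Phi(M + t_n h_n) - \Phi(M)}{t_n}
= \frac{2\langle M, h_n\rangle + t_n \|h_n\|_{2,3}^2}{\|M + t_n h_n\|_{2,3} + \|M\|_{2,3}}.
\]
Now I would let $n \to \infty$: by continuity of the inner product (Cauchy--Schwarz) $\langle M, h_n\rangle \to \langle M, h\rangle$, by boundedness of $\|h_n\|_{2,3}$ the term $t_n\|h_n\|_{2,3}^2 \to 0$, and by continuity of the norm $\|M + t_n h_n\|_{2,3} \to \|M\|_{2,3}$. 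Hence the quotient converges to $\frac{2\langle M, h\rangle}{2\|M\|_{2,3}} = \frac{\langle M, h\rangle}{\|M\|_{2,3}} = \Phi'_M(h)$, as claimed. Finally I would note that $h \mapsto \Phi'_M(h)$ is indeed a continuous linear map on $L^2([0,1]^3)$, as required for a Hadamard derivative.

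There is no real obstacle here; the only point requiring a modicum of care is justifying that $\|M + t_n h_n\|_{2,3} + \|M\|_{2,3}$ stays bounded away from zero so the final fraction is well-defined and its limit is as stated, which follows immediately from $\|M\|_{2,3} > 0$. Everything else is a direct application of the Hilbert-space structure and elementary limits, so the proof is short.
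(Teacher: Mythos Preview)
Your proof is correct and follows essentially the same idea as the paper: both expand $\|M+t_n h_n\|_{2,3}^2 = \|M\|_{2,3}^2 + 2t_n\langle M,h_n\rangle + t_n^2\|h_n\|_{2,3}^2$ and pass to the limit. The only cosmetic difference is that the paper first establishes Hadamard differentiability of $\|\cdot\|_{2,3}^2$ and then invokes the chain rule (Lemma~3.9.3 in \cite{VanWel96}) with the square root, whereas you bypass the chain rule via the identity $a-b=(a^2-b^2)/(a+b)$; the underlying computation is the same.
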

\begin{proof}
	For any sequences $h_n\to h$ with $h_n \in L^2([0,1]^3)$  and $t_n\to 0$ with $t_n \in \R\setminus \{0\}$, it holds
	\begin{align*}
	\frac{\|M+t_nh_n\|_{2,3}^2-\|M\|_{2,3}^2}{t_n}
	&= \frac{1}{t_n}\int_{[0,1]^3} 2 M(x)t_nh_n(x)+t_n^2h_n^2(x) \diff x\\
	&= \int_{[0,1]^3} 2M(x) h_n(x)\diff x +t_n\int_{[0,1]^3}h_n^2(x) \diff x,
	\end{align*}
	which converges to $2\int_{[0,1]^3}M(x)h(x) \diff x=2\langle M,h\rangle$. The square root function in $\R$ is Hadamard-differentiable at $x>0$ with derivative $(\sqrt{x})'=\frac{1}{2\sqrt{x}}$. By the chain rule for Hadamard-differentiable functions (Lemma 3.9.3 in \citealp{VanWel96}), the Hadamard-derivative of $\Phi$ is given by $\Phi'_M(h)=\tfrac{\langle M,h\rangle}{\|M\|_{2,3}}$.
\end{proof}

\begin{proof}[Proof of Theorem~\ref{theo:boot}]
	(i) can be deduced directly from Theorem 2 of \cite{Buecher2020}. For (ii) note that by Theorem C.3 of the supplementary material of the latter article, it holds $(\hat{\Bb}_T, \Bb_T^{(1)},\dots, \Bb_T^{(K)})\convw (\tilde{B},\tilde{B}^{(1)},\dots,\tilde{B}^{(K)})$, where $\Bb_T^{(k)} = (\tilde{B}_{T,1}^{(k)},\dots,\tilde{B}_{T,H}^{(k)})$ and
	\begin{multline*}
	\tilde{B}_{T,h}^{(k)}(u,\tau_1,\tau_2) \\
	= \frac{1}{\sqrt{T}}\sum_{i=1}^{\lfloor uT \rfloor \wedge (T-h)} \frac{R_i^{(k)}}{\sqrt{m}} \sum_{t=i}^{(i+m-1)\wedge(T-h)}X_{t,T}(\tau_1)X_{t+h,T}(\tau_2)-\ex[X_{t,T}(\tau_1)X_{t+h,T}(\tau_2)]. \end{multline*} 
	Note that for $u<1$ it holds $\lfloor uT\rfloor+m-1 \le T-h$, for any sufficiently large $T\in\N$. Thus, rewrite  
	\begin{multline*}
	\hat{B}_{h,T,T}^{(k)}(u,\tau_1,\tau_2)=\tilde{B}_{T,h}^{(k)}(u,\tau_1,\tau_2) \\+ \sqrt{\frac{m}{T}} \sum_{i=1}^{\lfloor uT \rfloor}R_i^{(k)} \bigg(\frac{1}{T-h}\sum_{t=1}^{T-h} \ex[X_{t,T}(\tau_1)X_{t+h,T}(\tau_2)]-X_{t,T}(\tau_1)X_{t+h,T}(\tau_2) \bigg)+\Oc_\pr\Big(\sqrt{\tfrac{m}{T}}\Big).
	\end{multline*}
	For the second term on the right-hand side of the latter display, it holds by independence of the random variables $R_i^{(k)}$,
	\begin{align*}
	&\phantom{{}={}} \ex\bigg\| \sqrt{\frac{m}{T}} \sum_{i=1}^{\lfloor \cdot  T \rfloor \wedge (T-h)}R_i^{(k)} \bigg(\frac{1}{T-h}\sum_{t=1}^{T-h} \ex[X_{t,T}\otimes X_{t+h,T}]-X_{t,T}\otimes X_{t+h,T} \bigg) \bigg\|_{2,3}^2\\
	&\leq \int_{[0,1]^2}m \ex\bigg[\bigg(\frac{1}{T-h}\sum_{t=1}^{T-h} \ex[X_{t,T}(\tau_1) X_{t+h,T}(\tau_2)]-X_{t,T}(\tau_1) X_{t+h,T}(\tau_2) \bigg)^2\bigg] \diff(\tau_1,\tau_2)\\
	&= \frac{m}{(T-h)^2} \sum_{t_1,t_2=1}^{T-h} \int_{[0,1]^2} \cov\big(X_{t_1,T}(\tau_1)X_{t_1+h,T}(\tau_2),X_{t_2,T}(\tau_1)X_{t_2+h,T}(\tau_2)\big) \diff(\tau_1,\tau_2),
	\end{align*}
	which is of order $O(m/T)$ by the same arguments as in the proof of Theorem 2 of \cite{Buecher2020}.
	Thus, $\hat{\Bb}_{T,T}^{(k)} = \Bb_T^{(k)} + O_\pr(\sqrt{m/T})$ and (ii) follows.
\end{proof}

\begin{proof}[Proof of Corollary~\ref{cor:test}]

The assertions for the null hypothesis $H_0^{\scs (H)}$ follow from Theorem~\ref{theo:boot} and Corollary 4.3 in \cite{BucKoj19}. The null hypothesis $H_0^{\scs (H, \Delta)}$ may be treated by similar arguments as in the last-named corollary, observing that the weak limit of $\bar {\mathcal S}_{H, \Delta,T}$ is stochastically bounded by $\max_{h=1}^H \langle M_h, \tilde B_h\rangle$ on the positive real line.
 The assertions regarding the alternative hypotheses follow from divergence to infinity of the test statistics and stochastic boundedness of the bootstrap statistics.  \end{proof}


\section*{Acknowledgements}
Financial support by the German Research Foundation is gratefully acknowledged. 
Collaborative Research Center “Statistical modelling of nonlinear dynamic processes” (SFB 823, Teilprojekt A1, A7 and C1) of the German Research Foundation (Deutsche Forschungsgemeinschaft, DFG) is gratefully acknowledged. Further, the research of H. Dette was partially supported by the German Research Foundation under Germany's Excellence Strategy - EXC 2092 CASA - 390781972.

The authors are grateful to V. Characiejus and G. Rice for providing their software and to N. Jumpertz for help with the data set.

\bibliographystyle{chicago}

\bibliography{bibliography}
\end{document}